\newcommand*{\DashedArrow}[1][]{\mathbin{\tikz [baseline=-0.25ex,-latex, dashed,#1] \draw [#1] (0pt,0.5ex) -- (1.3em,0.5ex);}}%
\newtheorem{theorem}{Theorem}[section]
\newtheorem{lemma}[theorem]{Lemma}
\newtheorem{proposition}[theorem]{Proposition}
\newtheorem{example}[theorem]{Example}
\newtheorem{remark}[theorem]{Remark}
\newtheorem{corollary}[theorem]{Corollary}
\def\qed{\hfill\vbox{\hrule\hbox{\vrule\kern3pt\vbox{\kern6pt}\kern3pt\vrule}\hrule}\bigskip}
\def\C{{\mathbb{C}}}
\def\P{{\mathbb{P}}}
\def\A{{\mathbb{A}}}
\def\K{{\mathbb{K}}}
\newcommand{\dashedrightarrow}[1][2pt]{%
  \settowidth{\@tempdima}{$\longrightarrow$}\longrightarrow
  \makebox[-\@tempdima]{\hskip-1.5ex\color{white}\rule[0.5ex]{#1}{1pt}}
  \phantom{\longrightarrow}
}
\newcounter{mnotecounter}
\title{Covering rational surfaces with rational parametrization images}
\author{Jorge Caravantes,  J. Rafael Sendra,  David Sevilla and Carlos Villarino}
\begin{document}

\maketitle

\begin{abstract}
Let $S$ be a rational projective surface given by means of a projective rational parametrization whose base locus satisfies a mild assumption. In this paper we present an algorithm that provides three rational maps $f,g,h:\A^2\DashedArrow[->,densely dashed]S\subset \P^n$ such that the union of the three images covers $S$. As a consequence, we present a second algorithm that generates two rational maps $f,\tilde{g}:\A^2\DashedArrow[->,densely dashed]S$, such that  the union of its images  covers the affine surface $S\cap \mathbb{A}^{n}$. In the affine case, the number of rational maps involved in the cover is in general optimal.
\end{abstract}

\noindent {\bf 2010 Mathematics Subject Classification.} Primary 14Q10, 68W30.

\noindent {\bf Keywords.} Rational surface, birational parametrization, surjective parametrization, surface cover, base points.

\section{Introduction}
Rational parametrizations of algebraic varieties are an important tool in many geometric applications like those in computer aided design (see e.g. \cite{FarinHoschekKim2002a}, \cite{HL93}) or computer vision (see e.g. \cite{marsh}). Nevertheless, the applicability of this tool can be negatively affected if the parametrization is missing basic properties: for instance its injectivity, its surjectivity, or the nature of the ground field where the coefficients belong to; see e.g. the introductions of the papers \cite{SendraSevillaVillarino2015a}, \cite{SendraSevillaVillarino2014a} and \cite{SendraSevillaVillarino2016a} for some illustrating examples of this phenomenon.

In this paper we focus on the surjectivity of rational parametrizations. Surjective parametrizations, also called normal parametrizations, have been studied by many authors but still many important questions, both theoretical and computational, stay open. The case of curves, over algebraically closed fields of characteristic zero, is understood comprehensively, and one can always find a surjective, indeed also injective, affine parametrization of the  affine algebraic curve (see e.g. \cite{AndradasRecio2007a}, \cite{RSV}, \cite{Sendra2002a}, \cite{PerezSendraWinkler2008a}). Furthermore, the case of curves defined over the field of the real numbers is also studied and characterized in \cite{Sendra2002a}.

The situation, as usually happens, turns to be much more complicated when dealing with surfaces. In \cite{BajajRoyappa1995a}, \cite{GaoChou1991a} the first steps in this direction were given and answers for certain types of surfaces, like quadrics, were provided. Also, in \cite{PerezdiazSendraVillarino2010a} the relation of the polynomiality of parametrizations to the surjectivity was analyzed. Nevertheless, in \cite{MEGA_2017} it is shown that there exist rational surfaces that cannot be parametrized birationally and surjectively. As a consequence of this fact, the question of whether every rational surface can be covered by the union of the images of finitely many birational parametrizations is of interest. The answer is positive and can be deduced from the results in \cite{Bodnar2008}. In previous papers, the authors have studied this problem for special types of surfaces. In \cite{SendraSevillaVillarino2015a} the unreachable points of parametrizations of surfaces of revolution are characterized. In \cite{SendraSevillaVillarino2016a} it is proved that ruled surfaces can be covered by using two rational parametrizations. In addition, in \cite{SendraSevillaVillarino2014a} an algorithm to cover an affine rational surface without based points at infinity with at most three parametrizations is presented.

In this paper we continue the research described above and we present two main algorithmic and theoretical results. Moreover, we provide an algorithm that, for any projective surface parametrization, generates a cover of the projective surface with three parametrizations, assuming that, either the base locus of the input is empty, or the Jacobian of the input parametrization, specialized at each base point, has rank two. As a consequence of this result, we also present an algorithm that, for a given affine parametrization whose projectivization satisfies the condition on the base points mentioned above, returns a cover of the affine surface with two affine parametrizations.

Taking into account the results in \cite{MEGA_2017}, the affine cover presented in this paper is, in general, optimal. Furthermore, it improves the results in \cite{SendraSevillaVillarino2014a} and extends the results in \cite{SendraSevillaVillarino2016a} to a much more general class of surfaces. With respect to the projective cover case, although theoretically interesting, we cannot ensure that the number of parametrizations involved in the cover is optimal for a generically large class of projective surfaces since, for instance, the whole projective plane can be covered with just two maps  from the affine plane as the following example shows. We leave this theoretical question open as future reseach
\begin{example}\label{ex-intro}
Consider the following two maps:
\[\begin{array}{c@{}ccc}
f:&\A^2&\to&\P^2\\
&(s,t)&\mapsto&(1:s:t)
\end{array},\ \
\begin{array}{c@{}ccc}
g:&\A^2&\to&\P^2\\
&(s,t)&\mapsto&(s(1-s):t:(1-s)^2)
\end{array} \]
Then for any point $P:=(x:y:z)\in\P^2$, if $x\neq0$ then
$P:=f\left(\frac{y}{x},\frac{z}{x}\right)$; if $x=0\neq z$ then $P:=g\left(0,\frac{y}{z}\right)$; and if $x=z=0$ then $P:=g(1,y)$, $y\ne 0$. This means that we can cover the whole projective plane with just two maps from the affine plane.
\end{example}

The paper is structured as follows. In Section \ref{sec-preliminares}, we introduce some notation and we briefly recall some notions and results that will be used throughout the paper. In Section \ref{sec:chicha}, we present the projective cover algorithm and in Section \ref{sec-example} we illustrate the result by means of some examples. In Section \ref{sec-affine}, we apply the results in Section \ref{sec:chicha} to derive the two affine parametrization cover algorithm.

\section{Preliminaries}\label{sec-preliminares}

In this section, we briefly recall some concepts and results that will be used in the subsequent sections. We essentially recall some results on the fundamental locus of rational maps and some consequences and the characterization of zero dimensional ideals via Gr\"obner bases. Throughout this paper $\K$ is an algebraically closed field of characteristic zero, and $\P^n$ the projective space over $\K$. Moreover, we denote by $\mathbb{A}_i$ the affine space $\{(x_0:\cdots:x_n)\in \P^n,|\, x_i\neq 0\}$. In the examples, the field $\K$ will be the field $\C$ of the complex numbers.

Let $X$ be an irreducible projective variety and let $f:X\DashedArrow[->,densely dashed]\mathbb{P}^n$ be a rational map. The \emph{fundamental locus} of $f$ is the algebraic set $F(f)$ of points to which $f$ cannot be extended regularly. Any $P\in F(f)$ is called a \emph{fundamental point} of $f$. The following theorem analyzes the dimension of the fundamental locus.

\begin{theorem}\label{FundLoc}\cite[Lemma V.5.1]{Hartshorne1977a}
Let $X$ be a smooth irreducible projective variety and let $f:X\DashedArrow[->,densely dashed]\mathbb{P}^n$ be a rational map generically finite.  The fundamental locus of $f$ has codimension at least 2 in $X$.
\end{theorem}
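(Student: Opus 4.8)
The plan is to reduce the global statement to a purely local one at codimension-one points and then invoke the valuative criterion of properness, which is available because $\mathbb{P}^n$ is proper over $\K$. First I would record that the fundamental locus $F(f)$ is, by its definition, the complement in $X$ of the largest open set on which $f$ is a morphism, hence $F(f)$ is closed. It therefore suffices to rule out the existence of a codimension-one component, and I would argue by contradiction: suppose $D$ is an irreducible closed subset of $F(f)$ of codimension one in $X$, and let $\eta$ be its generic point. Since $\eta\in D\subseteq F(f)$ and $F(f)$ is closed, $f$ is not regular at $\eta$.

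The key point is that smoothness gives good local rings in codimension one. Because $X$ is smooth, the local ring $\mathcal{O}_{X,\eta}$ is a regular local ring of dimension one, hence a discrete valuation ring whose fraction field is the function field $K(X)$. The rational map $f$ is regular at the generic point of $X$, so it determines a morphism $\mathrm{Spec}\,K(X)\to\mathbb{P}^n$, and the inclusion $\mathrm{Spec}\,\mathcal{O}_{X,\eta}\hookrightarrow X$ carries the generic point of $\mathrm{Spec}\,\mathcal{O}_{X,\eta}$ to the generic point of $X$. I would then apply the valuative criterion of properness to the proper morphism $\mathbb{P}^n\to\mathrm{Spec}\,\K$: since $\mathcal{O}_{X,\eta}$ is a DVR, the morphism from its generic point extends uniquely to $\mathrm{Spec}\,\mathcal{O}_{X,\eta}\to\mathbb{P}^n$. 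This precisely says that $f$ is regular at $\eta$, contradicting the previous paragraph. Hence $F(f)$ has no codimension-one component and so has codimension at least two.

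An equivalent, more computational route that I would keep in reserve replaces the valuative criterion by the unique factorization property of regular local rings. Fixing a point $P\in X$, the ring $R=\mathcal{O}_{X,P}$ is a regular local ring, hence a UFD (Auslander--Buchsbaum). Writing $f=(f_0:\cdots:f_n)$ with the $f_i\in R$ after clearing a common denominator, I would divide out by $\gcd(f_0,\ldots,f_n)$ so that the $f_i$ have no common factor in $R$. Near $P$ the map $f$ is undefined exactly on $V(f_0,\ldots,f_n)$; if this set had a codimension-one component it would be cut out near its generic point by a height-one prime, which in a UFD is principal, say $(\pi)$, forcing $\pi$ to divide every $f_i$ and contradicting that their gcd is a unit.

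The main obstacle, and the place where the hypotheses genuinely enter, is the codimension-one local analysis: one must know that smoothness of $X$ produces discrete valuation rings (equivalently, UFD local rings) at the generic points of prime divisors, since it is exactly this regularity that forbids a divisorial indeterminacy locus. I note that the generic finiteness of $f$ is not needed for this codimension bound on $F(f)$; the argument above applies to any rational map from a smooth projective variety to $\mathbb{P}^n$.
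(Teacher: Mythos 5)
Your argument is correct. The paper offers no proof of this statement---it simply cites Hartshorne, Lemma V.5.1---and your reduction to the generic point of a hypothetical codimension-one component, where smoothness gives a discrete valuation ring and the valuative criterion (equivalently, clearing denominators so that some coordinate has minimal valuation, or your UFD variant) forces $f$ to be regular, is precisely the standard proof of that cited lemma; you are also right that generic finiteness is not needed. The only step left implicit is spreading out the morphism $\mathrm{Spec}\,\mathcal{O}_{X,\eta}\to\mathbb{P}^n$ to an open neighbourhood of $\eta$ in $X$ and checking it agrees with $f$ there (immediate since $\mathbb{P}^n$ is separated and the two maps agree at the generic point of $X$), which is routine.
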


\begin{corollary}\label{cor-case-of-surfaces}
Let $X$ be a smooth irreducible surface and $f$ as in Theorem \ref{FundLoc}. $F(f)$  is either empty or zero dimensional.
\end{corollary}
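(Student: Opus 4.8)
The plan is to derive this as an immediate consequence of Theorem \ref{FundLoc}. Since $X$ is a smooth irreducible surface, its dimension is exactly $2$, and the map $f$ satisfies the hypotheses of Theorem \ref{FundLoc} by assumption (in particular $f$ is generically finite). Therefore the theorem applies directly and tells us that the fundamental locus $F(f)$ has codimension at least $2$ in $X$.

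From here the argument is purely dimensional. The codimension of an algebraic subset $Z \subseteq X$ is $\dim X - \dim Z$, so the inequality $\operatorname{codim}_X F(f) \geq 2$ combined with $\dim X = 2$ forces $\dim F(f) \leq 0$. An algebraic set of dimension at most $0$ has no positive-dimensional components, so each of its irreducible components is a single point; hence $F(f)$ is a finite set of points, that is, it is zero dimensional. The only remaining case is that $F(f)$ contains no points at all, in which case it is empty. These two alternatives, empty or zero dimensional, exhaust the possibilities and give exactly the claimed dichotomy.

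There is essentially no genuine obstacle here: the entire content is carried by Theorem \ref{FundLoc}, and what remains is the elementary translation of the codimension bound into a dimension bound for a two-dimensional ambient variety. The only point requiring a small amount of care is the degenerate case where $F(f) = \emptyset$, which is why the statement is phrased as ``either empty or zero dimensional'' rather than simply ``finite''; one should handle this case explicitly so that the dimension arithmetic is only invoked when $F(f)$ is nonempty.
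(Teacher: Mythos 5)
Your proposal is correct and matches the paper's (implicit) reasoning exactly: the paper states this corollary without proof as an immediate consequence of Theorem \ref{FundLoc}, since codimension at least $2$ in a two-dimensional variety forces dimension at most $0$. Nothing further is needed.
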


The traditional way for solving indeterminacies in algebraic geometry consists in blowing up fundamental points (see e.g.  \cite[IV.3.3]{Shafarevich_2nd_1}) and composing with the corresponding map as the next theorem shows.

\begin{theorem}\label{BlowUp}\cite[Example 7.17.3]{Hartshorne1977a} or \cite[Theorem II.7]{Beauville1996a}
Let X be a smooth surface. Let $f\colon X\DashedArrow[->,densely dashed]\mathbb{P}^n$ be a rational map. Then there exists a commutative diagram
\[ \xy
	(0,12)*+{Y}="Y";
	(-15,0)*++{X}="X";
	(15,0)*++{\mathbb{P}^n}="PN";
	{\ar_{\displaystyle g} "Y"; "X"};
	{\ar^{\displaystyle h} "Y"; "PN"};
	{\ar@{-->}^{\displaystyle f} "X"; "PN"};
\endxy \]
where $g$ is a composite of blowups involving fundamental points of $f$ and $h$ is a morphism.
\end{theorem}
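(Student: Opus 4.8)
The plan is to resolve the indeterminacy of $f$ through a finite sequence of point blow-ups, controlled by a strictly decreasing nonnegative intersection-theoretic invariant. I would first reformulate $f$ in terms of a linear system: writing $f=(f_0:\cdots:f_n)$ with the $f_i$ sections of a suitable line bundle $\mathcal{O}_X(D)$, the map is governed by the linear system $V=\langle f_0,\ldots,f_n\rangle$. Since $f$ is unchanged when we cancel a common factor, I may discard the divisorial (fixed) part of $V$ and assume its base locus has no divisorial component. By Theorem \ref{FundLoc} and Corollary \ref{cor-case-of-surfaces}, this base locus then coincides with $F(f)$ and is either empty or a finite set of points. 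If it is empty, $f$ is already a morphism and we take $Y=X$ and $g=\mathrm{id}$; otherwise there is at least one base point to resolve.

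Next I would set up the decreasing invariant. Let $D'$ denote the moving part of the system and put $\delta=(D')^2$. Because $V$ has no fixed component, two general members meet in an effective zero-cycle, so $\delta\geq 0$. Choose a base point $p$ of multiplicity $m=\mathrm{mult}_p\,V\geq 1$, and let $\pi\colon X_1\to X$ be the blow-up at $p$, with exceptional curve $E$ satisfying $E^2=-1$ and $\pi^*D'\cdot E=0$. The total transform $\pi^*V$ contains $mE$ in its base locus, so the new moving divisor is $D_1=\pi^*D'-mE$, and
\[(D_1)^2=(\pi^*D')^2-2m(\pi^*D'\cdot E)+m^2E^2=\delta-m^2<\delta.\]
Thus each blow-up of a base point strictly decreases $\delta$ by $m^2\geq 1$.

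Finally I would iterate and conclude. The map $f\circ\pi$ is defined on $X_1$ by the moving system $|D_1|$, whose base locus is again finite since $X_1$ is a smooth surface, so the same step applies. As $\delta$ is a nonnegative integer that strictly decreases at each stage, after finitely many blow-ups $\pi_1,\ldots,\pi_r$ the moving linear system becomes base-point-free and hence defines a morphism $h\colon Y\to\mathbb{P}^n$, where $Y=X_r$ and $g=\pi_1\circ\cdots\circ\pi_r$. By construction $h=f\circ g$ as rational maps, which is exactly the asserted commutative diagram.

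I expect the main obstacle to be the careful bookkeeping of the linear-system data across the successive blow-ups: one must check that discarding the fixed part never alters the map, that the intersection identities $\pi^*D'\cdot E=0$ and $E^2=-1$ hold on each smooth projective surface produced, and that the centers blown up are genuinely fundamental points (possibly infinitely near) of $f$. The essential input that forces termination is that the controlling invariant $\delta$ is nonnegative and drops by a positive amount at every step; establishing its nonnegativity from the absence of a fixed component, together with the projectivity needed to make the intersection theory available, is the crux of the argument.
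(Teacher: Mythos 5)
The paper states this result without proof, citing Hartshorne and Beauville, and your argument is precisely the standard proof in the cited reference (Beauville, Theorem II.7): resolve the indeterminacy by blowing up base points of the moving linear system, using the strict decrease of its nonnegative self-intersection number to force termination. Your write-up is correct, including the key points that removing the fixed part leaves a finite base locus and that the exceptional curve never becomes a fixed component, so there is nothing to add.
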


A first consequence of Theorem \ref{BlowUp} is the following.

\begin{corollary}\label{cor:imagen_racional}\cite[Corollary 2.5]{MEGA_2017}
Let $X$ and $f$ be as in Theorem \ref{BlowUp}. For any fundamental point $P$ of $f$, $h(g^{-1}(P))$ is a connected finite union of rational curves.
\end{corollary}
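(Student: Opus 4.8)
The plan is to understand the fiber $g^{-1}(P)$ purely in terms of the blow-up structure of $g$, and then to transport that description through the morphism $h$. Since $g\colon Y\to X$ is, by Theorem~\ref{BlowUp}, a composite of blow-ups centered at points lying over the fundamental point $P$, the geometric content splits into two essentially independent assertions: that $g^{-1}(P)$ is itself a connected finite union of rational curves, and that the image of such a configuration under a morphism is again a connected finite union of rational curves. I treat these in turn.

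For the first assertion I would argue by induction on the number $k$ of blow-ups composing $g$. Write $g=g'\circ\sigma$, where $\sigma\colon Y\to Y'$ is the last blow-up, centered at a point $Q\in Y'$, and $g'\colon Y'\to X$ is the composite of the remaining $k-1$ blow-ups. When $k=1$ the fiber $g^{-1}(P)$ is the exceptional divisor of a single point blow-up on a smooth surface, hence isomorphic to $\P^1$, which is rational and connected. For the inductive step, note that $\sigma$ is an isomorphism away from $Q$, so $g^{-1}(P)=\sigma^{-1}((g')^{-1}(P))$: if $g'(Q)\neq P$ this is isomorphic to $(g')^{-1}(P)$ and the conclusion is inherited, while if $g'(Q)=P$ it is obtained from the fiber $(g')^{-1}(P)$ by replacing its components by their strict transforms, each still rational since birational to the original and with the same incidences preserved, together with the new exceptional line $\sigma^{-1}(Q)\cong\P^1$, which meets at least one strict transform because $Q$ lies on $(g')^{-1}(P)$. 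Rationality of each component, finiteness, and connectedness are thus all preserved. Alternatively, connectedness alone follows at once from Zariski's Main Theorem, as $g$ is a proper birational morphism onto the normal (indeed smooth) surface $X$.

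It then remains to push this configuration forward by $h$. Since $h$ is a morphism, it is continuous, so $h(g^{-1}(P))$ is connected because $g^{-1}(P)$ is; and since $g^{-1}(P)$ is complete (a fiber of the proper morphism $g$), each image is closed. For an irreducible component $C$ of $g^{-1}(P)$, the restriction $h|_C$ is a morphism from a rational curve, so $h(C)$ is either a point or an irreducible curve dominated by $C$, hence in the latter case rational by L\"uroth's theorem. Taking the union over the finitely many components, the positive-dimensional pieces are rational curves, and any component contracted by $h$ meets an adjacent component, so by connectedness its image point already lies on the image of that neighbor. Moreover, $h$ cannot contract all of $g^{-1}(P)$ to a single point: were this the case, all limits of $f$ approaching $P$ would coincide and $f$ would extend regularly at $P$, contradicting that $P$ is a fundamental point. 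Hence $h(g^{-1}(P))$ is genuinely a connected finite union of rational curves.

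The main obstacle I anticipate is the bookkeeping in the inductive step: one must verify carefully that blowing up a point $Q$ on $Y'$ transforms the fiber $(g')^{-1}(P)$ into the configuration consisting of strict transforms plus the new exceptional curve, with connectivity, rationality, and finiteness all preserved, and in particular that strict transforms of rational curves stay rational and that no incidence is lost. Invoking Zariski's Main Theorem disposes of the connectedness part, and the non-degeneracy argument handles the collapsed case, but the rationality and finiteness of the components still rest on this explicit blow-up analysis.
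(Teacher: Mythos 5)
The paper does not actually prove this corollary: it is quoted verbatim from \cite[Corollary 2.5]{MEGA_2017}, so there is no internal proof to compare against. Judged on its own, your argument is correct and follows the standard route one would expect that reference to take: an induction on the blow-ups to show that $g^{-1}(P)$ is a connected finite union of rational curves (a tree of $\P^1$'s), followed by pushing this forward through the morphism $h$ using L\"uroth for the non-contracted components and connectedness of continuous images. Two points deserve slightly more care than you give them. First, in the inductive step the phrase ``with the same incidences preserved'' is not literally true: two components of $(g')^{-1}(P)$ meeting only at $Q$ have disjoint strict transforms; connectedness survives only because both strict transforms meet the new exceptional curve $E=\sigma^{-1}(Q)$, which is exactly the point you should make (or simply lean entirely on Zariski's Main Theorem, as you also suggest). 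Second, the final non-degeneracy step --- that $h$ cannot contract all of $g^{-1}(P)$ to a point, which also rules out the case where $P$ is never a center of blow-up and the fiber is a single point --- is stated informally as ``all limits of $f$ approaching $P$ would coincide''; coincidence of limits gives a continuous extension, not immediately a regular one. The clean justification is that $g$ is a proper birational morphism onto the smooth (hence normal) surface $X$, so $g_*\mathcal{O}_Y=\mathcal{O}_X$ and any morphism $Y\to\P^n$ constant on the connected fiber $g^{-1}(P)$ descends to a morphism defined at $P$, contradicting that $P$ is a fundamental point. With those two clarifications the proof is complete.
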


\begin{remark}\label{rem:dimension1} Let $f\colon \mathbb{P}^2\DashedArrow[->,densely dashed]\mathbb{P}^n$ be as in Theorem \ref{BlowUp}, and let $S$ be the Zariski closure of $f(\mathbb{P}^2)$ in $\P^n$. The complementary in $S$ of the $f(\A^2)$ is, according to Theorem \ref{BlowUp}, contained in $h(g^{-1}(F(f)\cup L_\infty))$, where $L_\infty=\mathbb{P}^2-\mathbb{A}^2$. Such a subset consists of some rational curves and, if $f$ contracts $L_\infty$, a closed point (see \cite[Corollary 2.5]{MEGA_2017}).
\end{remark}

We end this section with a well-known result on elimination theory that will be used in Section \ref{sec:chicha}.

\begin{theorem}\cite[Chapter 5, Theorem 6]{CoxLittleOshea}\label{tma:forma}
Let $I$ be an ideal in $\K[x_1,..,x_n]$. Then, the following statements are equivalent:
\begin{enumerate}
\item The algebraic subset of  $\K^n$  defined by $I$ is a finite set.
\item Let $B$ be a Gr\"obner basis for $I$ with respect to a fixed monomial ordering. Then, for each $1\le i\le n$, there is some $m_i\in \mathbb{N}$ such that $x_i^{m_i}$ is the leading monomial of an element of $B$.
\end{enumerate}
\end{theorem}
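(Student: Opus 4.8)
The plan is to route both implications through the quotient ring $\K[x_1,\ldots,x_n]/I$ and its standard monomial basis. The pivotal tool is the standard consequence of the division algorithm (Macaulay's basis theorem): the residue classes of the monomials \emph{not} lying in the leading-term ideal $\langle\mathrm{LT}(I)\rangle$ form a $\K$-vector space basis of $\K[x_1,\ldots,x_n]/I$, because every polynomial reduces modulo $B$ to a unique $\K$-linear combination of such standard monomials. I would first record that condition (2) as stated is equivalent to the assertion $x_i^{m_i}\in\langle\mathrm{LT}(I)\rangle$ for each $i$: since $B$ is a Gr\"obner basis, $\langle\mathrm{LT}(I)\rangle$ is the monomial ideal generated by the leading monomials of the elements of $B$, and a pure power $x_i^{m_i}$ lies in it exactly when it is divisible by some such leading monomial; divisibility of a pure power forces that leading monomial to itself be a power $x_i^{k}$ with $k\le m_i$. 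So the two formulations coincide, and I may work throughout with the leading-term-ideal version.

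Next I would establish the equivalence of (2) with $\dim_\K \K[x_1,\ldots,x_n]/I<\infty$. If $x_i^{m_i}\in\langle\mathrm{LT}(I)\rangle$ for every $i$, then any standard monomial $x_1^{a_1}\cdots x_n^{a_n}$ must satisfy $a_i<m_i$ for all $i$, since otherwise it would be divisible by some $x_i^{m_i}$ and hence fail to be standard; there are therefore at most $\prod_i m_i$ standard monomials, so the quotient is finite dimensional. Conversely, if for some index $i$ no power of $x_i$ lies in $\langle\mathrm{LT}(I)\rangle$, then every $x_i^{a}$ is standard and the basis is infinite. To deduce (1) from finite dimensionality I would use the univariate trick: the classes $1,x_i,x_i^2,\ldots$ are linearly dependent in a finite-dimensional quotient, so there is a nonzero univariate $h_i(x_i)\in I$; every point of the algebraic set $V(I)\subseteq\K^n$ defined by $I$ then has $i$-th coordinate among the finitely many roots of $h_i$, so $V(I)$ lies in a finite grid and is finite. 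This direction needs no hypothesis on $\K$.

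The direction (1)$\Rightarrow$(2) is where the genuine content sits, and it is the step I expect to be the main obstacle, precisely because it is where algebraic closedness of $\K$ becomes indispensable. Assume $V(I)=\{p_1,\ldots,p_k\}$ is finite, fix a coordinate $i$, let $c_{i,1},\ldots,c_{i,s}$ be the distinct $i$-th coordinates occurring among the $p_j$, and set $g_i(x_i)=\prod_\ell (x_i-c_{i,\ell})$. Then $g_i$ vanishes on $V(I)$, so Hilbert's Nullstellensatz gives $g_i\in\sqrt{I}$ and hence $g_i^{N}\in I$ for some $N$; as $g_i^{N}$ is univariate in $x_i$ with leading term a scalar multiple of $x_i^{\deg g_i^N}$, we obtain $x_i^{\deg g_i^N}\in\langle\mathrm{LT}(I)\rangle$, which is (2). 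I would emphasize that algebraic closedness cannot be dropped: over $\R$ the ideal $(x_1^2+x_2^2+1)$ has empty, hence finite, variety, yet no pure power of $x_2$ lies in its leading-term ideal, so (1) holds while (2) fails. Once the standard monomial basis is in place the two combinatorial implications are routine bookkeeping; packaging the Nullstellensatz application is the only step that carries real weight.
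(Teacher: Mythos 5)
Your proof is correct. The paper gives no proof of this statement --- it is quoted directly from Cox--Little--O'Shea as the Finiteness Theorem --- and your argument (reformulating (2) via the leading-term ideal, the standard-monomial basis of the quotient to link (2) with finite dimensionality, linear dependence of the powers of $x_i$ to get (1), and the Nullstellensatz applied to $\prod_\ell(x_i-c_{i,\ell})$ for the converse, which is indeed the only step using that $\K$ is algebraically closed) is essentially the proof given in that reference.
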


\section{Covering projective surfaces with three parametrizations}\label{sec:chicha}

Throughout this section, let $S\subset \P^n$ be a rational projective surface and let
\[
F=(F_0:\cdots:F_n):\P^2\DashedArrow[->,densely dashed]S\subset\P^n
\]
be a (not necessarily birational) parametrization of $S$, given by $n+1$ homogeneous coprime polynomials $F_0,...,F_n$ where the nonzero polynomials have degree $d$. In addition, let the homogeneous ideal $I=(F_0,...,F_n)\mathbb{K}[x_0,x_1,x_2]$ be called the fundamental ideal associated to $F$. 

Since the polynomials defining $F$ are coprime, by Corollary \ref{cor-case-of-surfaces},  $I$ defines a closed algebraic subset $\mathcal{A}$ of $\P^2$ that is either empty or consists of a finite amount of points. If $\mathcal{A}=\emptyset$, then $F$ defines a regular map and its restrictions to  each of   the three affine planes $\A_i=\{(x_0:x_1:x_2)\ |\ x_i\ne0\}$ covering $\P^2$ define 3 charts that cover $S$, since the image of a projective variety by a regular map is always Zariski closed. Otherwise, say $\mathcal{A}=\{P_1,...,P_k\}$; we need to make the following assumption:
\begin{itemize}
\item[($*$)]  If $\mathcal{A}\neq \emptyset$, then for every $P\in \mathcal{A}:=\{P_1,\ldots,P_k\}$ the jacobian matrix of $F$ at $P$ has rank 2.
\end{itemize}
Note that $(*)$  guarantees that $I$ does not define multiple points (i.e. the base points of $F$ are simple). We also  assume  without loss of generality that:
\begin{itemize}
\item[$(a)$] no $P_j$ is in any of the lines $\{x_0=0\}$, $\{x_1=0\}$ and $\{x_2=0\}$,
\item[$(b)$] no pair $\{P_i$, $P_j\}$, $i\ne j$, is aligned with any of the coordinate points $(1:0:0)$, $(0:1:0)$ and $(0:0:1)$.
\end{itemize}
We observe  that  the real constraint lays in $(*)$, since conditions $(a)$ and $(b)$ are satisfied after a general change of coordinates. In the following remark we discuss how these hypotheses can be computationally checked.

\begin{remark}\label{rem:check_*} \
\begin{enumerate}
\item Note that condition $(*)$ implies that the point $P_j$ is regular in the projective scheme defined by the ideal $I$. Then, the intersection multiplicity is 1 at every $P_j$. Now, if we consider the ideal $J$ defined by the $3\times3$ minors of the jacobian matrix of $F$, the following methods, among others, can be applied to test  $(*)$:
\begin{enumerate}[label=(\roman*)]
\item  Check whether the ideal $I+J$ is zero-dimensional.
\item  Check whether  $\sqrt{I+J}=(x_0,x_1,x_2)\mathbb{K}[x_0,x_1,x_2]$ ( i.e. the irrelevant ideal) or, equivalently,  whether  $I+J$ contains a power of any of the variables.
\item  By means of resultants and gcds, using  the formulas in \cite[Theorems 2 and 3]{cox2020base} (see also \cite{Perez-Sendra2020a}).
\end{enumerate}
\item  Checking $(i)$ without explicitly determining $P_1,...,P_k$ can be carried out by certifying  that all the ideals generated by $\{F_0,...,F_n,x_i\}$ ($i=0,1,2$) either are zero-dimensional or contain a power of the irrelevant ideal.
    \item Checking $(ii)$ can be done by computing the ideal bases $B_1$ and $B_2$ of Algorithm {\tt 3PatchSurface} and checking whether they have adequate shape (see Proposition \ref{tma:deteccion_de_condiciones}). However, it may be more efficient to check that, for all $i=0,1,2$, the gcd of all resultants of couples $(F_0,F_j)$ with respect to $x_i$ is square free.
\end{enumerate}
\end{remark}

Now we consider the three affine planes $\A_i$ defined above. According to $(a)$ all $P_j$ lie in the intersection of the three affine planes. In this situation, the strategy is as follows.
We will work with the parametrization $$ f:= F|_{\A_0}:\A_0\DashedArrow[->,densely dashed]S$$ as defined in $\A_0$, and we will blowup $\A_1$ and $\A_2$ at the base points of $F$ to get new affine planes $\widetilde{\A_1}$ and $\widetilde{\A_2}$ with projections $\mathrm{Bl}_1:\widetilde{\A_1}\to\A_1$ and $\mathrm{Bl}_2:\widetilde{\A_2}\to\A_2$. Now, we introduce  the compositions $$\text{$ g:= F|_{\A_1}\circ \mathrm{Bl}_1:\widetilde{\A_1}\to\A_1\to S$ and $ h:= F|_{\A_2}\circ \mathrm{Bl}_2:\widetilde{\A_2}\to\A_2\to S$,}$$  and we prove that
the union of the images of $f$, $g$ and $h$ is the whole $S$ (see details in Proposition \ref{tma:deteccion_de_condiciones}). During this process, we also need to keep track of what happens with the infinity line of $\A_0$, namely $L_ \infty=\{(x_0:x_1:x_2)\ |\ x_0=0\}$. As a consequence, we derive the following Algorithm {\tt 3PatchSurface}.

\begin{algorithm}[h!]{{\bf Algorithm} {\tt 3PatchSurface}}

\begin{algorithmic}[1]
\REQUIRE A  map $F=(F_0:\cdots:F_n)$ defined by coprime homogeneous polynomials in $\mathbb{K}[x_0,x_1,x_2]$, where the nonzero polynomials have the same degree, parametrizing a Zariski dense subset of a projective surface $S\subset\P^n$, such that conditions $(*)$, $(a)$ and $(b)$ are satisfied.
\ENSURE Two maps $G=(G_0:\cdots:G_n)$ and $H=(H_0:\cdots:H_n)$ defined by homogeneous polynomials in $\mathbb{K}[x_0,x_1,x_2]$ where the nonzero polynomials have the same degree (while in the same list), such that the union of the images of
$F(1:\_:\_)$, $G(\_:1:\_)$, $H(\_:\_:1):\A^2\to S$ cover $S$.

\IF{the radical of the homogeneous ideal $(F_0,...,F_n)\mathbb{K}[x_0,x_1,x_2]$ is irrelevant (i.e. $F$ defines a regular morphism)}
\STATE{Return $G=F$, $H=F$.}
\ENDIF
\STATE\label{forma_de_base} For the ideals $I_i=(F_0,...,F_n,x_i-1)$ for $i=1,2$, compute reduced Gr\"obner bases \linebreak{$B_1=\{x_2-q_1(x_0),x_1-1,p_1(x_0)\}$ for $I_1$ and $B_2=\{x_2-1,x_1-q_2(x_0),p_2(x_0)\}$ for $I_2$, with lexicographical order $x_0<x_1<x_2$.}
\STATE {Set $k=\deg(p_i)>\deg(q_i)$ for whatever $i=1,2$.}
\STATE{Set $\mathbf{P}_1(x_0:x_1)=p_1(\frac{x_0}{x_1})x_1^{k}$ and $\mathbf{Q}_1(x_0:x_1)=q_1(\frac{x_0}{x_1})x_1^{k-1}$.}
\STATE\label{ExplosionA1}{Put $\widetilde{G}=(\widetilde{G}_0:\cdots:\widetilde{G}_n) =F\left(x_0x_1^{k}: x_1^{k+1}: x_1^2\mathbf{Q}_1(x_0:x_1)+x_2\mathbf{P}_1(x_0:x_1)\right)$,}
\STATE{Set $\mathbf{P}_2(x_0:x_2)=p_2(\frac{x_0}{x_2})x_2^{k}$ and $\mathbf{Q}_2(x_0:x_2)=q_2(\frac{x_0}{x_2})x_2^{k-1}$.}
\STATE{Set $\widetilde{H}=(\widetilde{H}_0:\cdots:\widetilde{H}_n)=F\left(x_0x_2^{k}: x_2^2\mathbf{Q}_2(x_0:x_2)+x_1\mathbf{P}_2(x_0:x_2): x_2^{k+1}\right)$}
\STATE \label{paso_gcd}Set $\widehat{G}=\gcd(\widetilde{G}_0,...,\widetilde{G}_n)$, $\widehat{H}=\gcd(\widetilde{H}_0,...,\widetilde{H}_n)$.
\STATE\label{paso_final} Return $G=\widetilde{G}/\widehat{G}$, $H=\widetilde{H}/\widehat{H}$.
\end{algorithmic}
\end{algorithm}

\begin{remark}
 In Proposition \ref{tma:deteccion_de_condiciones} we show   that the integer $k$, introduced in the algorithm, is exactly the number of base points, that is the cardinality of $\mathcal{A}$ (see above), so we have not introduced equivocal notation.
\end{remark}

In the following, we  see that the output of Algorithm {\tt 3PatchSurface} is correct (see Theorem \ref{tma:algoritmo_funciona}). We also recall that the required conditions for the algorithm can be checked computationally according to Remark \ref{rem:check_*}. We start by proving that Step \ref{forma_de_base} works properly, assuming that conditions $(*)$, $(a)$ and $(b)$ are satisfied. 
This is probably a well-known result in a more general setting but, up to the authors' knowledge, there are no suitable references for the proof.

\begin{proposition}\label{tma:deteccion_de_condiciones}
Let $F=(F_0:\cdots:F_n)$, $I_1$ and $I_2$ be as in Algorithm {\tt 3PatchSurface}. There exist $p_1,q_1\in\K[x_2]$, $p_2,q_2\in\K[x_1]$ such that {$k=$}$\deg(p_i)>\deg(q_i)$ for all $i=1,2$, and the reduced Gr\"obner basis $B_1$ and $B_2$ of $I_1$ and $I_2$ respectively, have the following shape:
\[B_1= \{x_2-q_1(x_0),x_1-1,p_1(x_0)\}\ \ \mbox{ and }\ \ B_2=\{x_2-1,x_1-q_2(x_0),p_2(x_0)\}.\]
\end{proposition}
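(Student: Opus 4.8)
The plan is to make the geometry of $V(I_1)$ and $V(I_2)$ completely explicit and then build the claimed Gr\"obner bases by hand, rather than invoking a general shape lemma for which, as noted, no convenient reference exists. By condition $(a)$ every base point can be written $P_j=(1:a_j:b_j)$ with $a_j,b_j\neq0$. Since the $F_i$ are homogeneous, setting $x_1=1$ dehomogenizes them and identifies $V(I_1)\subset\A^3$ with the finite set $\{(1/a_j,\,1,\,b_j/a_j)\}_{j=1}^{k}$; symmetrically $V(I_2)=\{(1/b_j,\,a_j/b_j,\,1)\}_{j=1}^{k}$. In particular $|V(I_1)|=|V(I_2)|=|\mathcal{A}|=k$, which also identifies the integer $k$ of the algorithm.

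Next I would record the two facts that feed the argument. First, expanding the collinearity determinant shows that $P_i,P_j$ are aligned with $(0:0:1)$ exactly when $a_i=a_j$ and with $(0:1:0)$ exactly when $b_i=b_j$; hence condition $(b)$ forces the $x_0$-coordinates $1/a_j$ of the points of $V(I_1)$ to be pairwise distinct, and likewise the $1/b_j$ of $V(I_2)$. Second, via the isomorphism $\K[x_0,x_1,x_2]/I_1\cong\K[x_0,x_2]/(f_0,\dots,f_n)$ with $f_i=F_i(x_0,1,x_2)$, I would deduce that $I_1$ is radical: the Euler relation evaluated at $P_j$ (where $F_i=0$) places the nonzero vector $P_j$ in the kernel of the Jacobian, so the $x_1$-column is a combination of the $x_0$- and $x_2$-columns and the rank-$2$ hypothesis $(*)$ descends to the affine Jacobian $(\partial f_i/\partial x_0,\partial f_i/\partial x_2)$; thus each point of $V(I_1)$ is a reduced, multiplicity-one point and $I_1=\sqrt{I_1}$. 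I expect this descent of the rank condition to be the main technical point, since it hinges on the Euler relation together with the guarantee from $(a)$ that no base point escapes to a coordinate line.

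With these in hand the assembly is routine. Put $p_1(x_0)=\prod_{j=1}^{k}(x_0-1/a_j)$, squarefree of degree $k$; it vanishes on $V(I_1)$ and hence lies in the radical ideal $I_1$. Let $q_1$ be the Lagrange interpolant of degree $<k$ with $q_1(1/a_j)=b_j/a_j$, so that $x_2-q_1(x_0)\in I_1$, while $x_1-1\in I_1$ trivially. In the order $x_0<x_1<x_2$ the leading terms of these three polynomials are $x_2,\,x_1,\,x_0^{k}$, which are pairwise coprime, so Buchberger's first criterion shows they form a Gr\"obner basis of the ideal $J$ they generate. The standard monomials of $J$ are $1,x_0,\dots,x_0^{k-1}$, so $\dim_\K\K[x_0,x_1,x_2]/J=k=\dim_\K\K[x_0,x_1,x_2]/I_1$; since $J\subseteq I_1$ and the quotients have equal finite dimension, $J=I_1$. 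One checks directly that no non-leading term is divisible by another leading term, so this is the reduced Gr\"obner basis $B_1$, with $\deg p_1=k>\deg q_1$. Running the identical argument for $I_2$ (now interpolating the values $a_j/b_j$ over the distinct nodes $1/b_j$) produces $B_2=\{x_2-1,\,x_1-q_2(x_0),\,p_2(x_0)\}$ with $\deg p_2=k>\deg q_2$, completing the proof.
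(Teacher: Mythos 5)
Your proof is correct, and it reaches the same three polynomials as the paper (the vanishing polynomial $p_i$ of the distinct $x_0$-coordinates and the Lagrange interpolant $q_i$, both of which exist thanks to $(a)$ and $(b)$), but the logical route is genuinely different. The paper argues top-down: it invokes Theorem \ref{tma:forma} to force the reduced Gr\"obner basis of the zero-dimensional ideal $I_i$ to contain a univariate polynomial in $x_0$ and a monic polynomial in each of $x_1$ and $x_2$, and then identifies each of these with $p_i$, $x_i-1$ and $x_{3-i}-q_i$ by reducedness and degree considerations; radicality of $I_i$ (needed to pass from ``vanishes on the base points'' to ``lies in $I_i$'') is only implicit, resting on the earlier remark that $(*)$ makes the base points simple. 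You argue bottom-up: you exhibit the candidate basis, observe that its leading monomials $x_2$, $x_1$, $x_0^k$ are pairwise coprime so Buchberger's first criterion certifies it as a Gr\"obner basis of the ideal $J$ it generates, and then close the gap $J\subseteq I_1$ by the dimension count $\dim_\K\K[x_0,x_1,x_2]/J=k=\dim_\K\K[x_0,x_1,x_2]/I_1$. This requires knowing $\dim_\K\K[x_0,x_1,x_2]/I_1=k$, i.e.\ radicality, which you prove carefully by descending the rank-two condition $(*)$ to the affine Jacobian via the Euler relation and $(a)$ --- arguably the most delicate point, and one the paper leaves to the reader. The trade-off: the paper's proof is shorter because it leans on the structure theorem for zero-dimensional ideals, while yours is more self-contained and makes explicit exactly where each hypothesis $(*)$, $(a)$, $(b)$ is consumed.
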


\begin{proof}
Observe that both $I_1$ and $I_2$ define finite sets in $\A_1$ and $\A_2$, respectively. By Theorem \ref{tma:forma}, this implies that, since $B_1$ and $B_2$ are Gr\"obner bases, with respect lex$(x_0<x_1<x_2)$, there is a polynomial in each $B_i$ just involving $x_0$. This is $p_1$ for $I_1$ and $p_2$ for $I_2$.
Due to $(*)$, $(a)$ and $(b)$, each $p_i$  defines  $k$ different parallel lines in $\A_i$, so its degree is $k$.

Applying again Theorem \ref{tma:forma}, there is another monic polynomial in $\K[x_0][x_1]\cap I_1$. Since the basis is reduced, and $x_1-1$ was originally among the generators of $I_1$, this polynomial in $\K[x_0][x_1]$ for $I_1$ is precisely $x_1-1$. 

Now, let $q_i$ be the interpolation polynomial whose graph goes through all the base points $P_j\in \A_i$ of $F$. It does exist because $(b)$ holds (so there are no two different $P_j$ in the same vertical line) and its degree is at most $k-1$. Then $x_1-q_2(x_0)$ vanishes at every $P_j$ so, by Hilbert's Nulltellensatz, it belongs to $\sqrt{I_2}$. Since $\deg(p_2)=k>\deg(q_2)$, $x_1-q(x_0)$ cannot be reduced by $p(x_0)$, so, since $B_1$ is reduced, this is the monic polynomial in $\K[x_0][x_1]\cap I_2$.

We apply an analogous argument and reduction by $x_1-1$ to deduce that the monic polynomial in $\K[x_0,x_1][x_2]$ for $I_1$ must be $x_2-q_2(x_0)$, and we know that $x_2-1$ is a reduced monic polynomial for $I_2$, so it is in $B_2$.

%
%
Since $B_1$ and $B_2$ are reduced Gr\"obner bases with respect to the lexicographical order $x_0<x_1<x_2$ and the ideals they generate define, precisely, the fundamental locus, they are the reduced Gr\"obner bases of $I_1$ and $I_2$.
\end{proof}

Before continuing, we state a Lemma.

\begin{lemma}\label{lma:no_base_pts}
In the conditions of Algorithm {\tt 3PatchSurface}, neither $G(\_:1:\_)$ nor $H(\_:\_:1)$ have affine base points.
\end{lemma}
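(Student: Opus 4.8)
The plan is to analyze everything in the affine chart $x_1=1$, since $G(\_:1:\_)$ is by definition the restriction $\widetilde G(x_0:1:x_2)/\widehat G(x_0:1:x_2)$. Writing $\bar F_i(x_0,x_2):=F_i(x_0,1,x_2)$ and using the Gr\"obner shape provided by Proposition \ref{tma:deteccion_de_condiciones}, the substitution in Step \ref{ExplosionA1} specialized at $x_1=1$ reads $\widetilde G_i(x_0,1,x_2)=\bar F_i\bigl(x_0,\ q_1(x_0)+x_2\,p_1(x_0)\bigr)$. By condition $(a)$ every base point of $F$ lies in $\A_1$, and by Proposition \ref{tma:deteccion_de_condiciones} together with $(b)$ these are exactly the $k$ points $P_j=(\alpha_j,q_1(\alpha_j))$, where $\alpha_1,\dots,\alpha_k$ are the $k$ distinct roots of $p_1$. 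First I would show that the affine base locus of $\widetilde G(\_:1:\_)$ is precisely the union of the $k$ vertical lines $\{x_0=\alpha_j\}$: indeed $\widetilde G(x_0,1,x_2)$ vanishes iff $\bigl(x_0,\,q_1(x_0)+x_2 p_1(x_0)\bigr)$ is one of the $P_j$, and since $p_1(\alpha_j)=0$ the second coordinate collapses to $q_1(\alpha_j)$ for every value of $x_2$, so the whole line $\{x_0=\alpha_j\}$ is contracted to $P_j$ while no other point is sent to the base locus.

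The core of the argument is to identify $\widehat G|_{x_1=1}$ exactly. On one hand, each projective line $\{x_0-\alpha_j x_1=0\}$ is, by the previous paragraph and the fact that the base locus is Zariski closed, contained in the common zero set of $\widetilde G_0,\dots,\widetilde G_n$; hence each factor $x_0-\alpha_j x_1$ divides every $\widetilde G_i$, so $\prod_j(x_0-\alpha_j x_1)$ divides $\widehat G$ and therefore $p_1\mid \widehat G|_{x_1=1}$. On the other hand I would bound the multiplicity: expanding $\widetilde G_i(x_0,1,x_2)$ along $\{x_0=\alpha_j\}$ gives $\widetilde G_i=(x_0-\alpha_j)\,h_{i,j}+O\bigl((x_0-\alpha_j)^2\bigr)$ with $h_{i,j}(x_2)=\nabla\bar F_i(P_j)\cdot\bigl(1,\ q_1'(\alpha_j)+x_2\,p_1'(\alpha_j)\bigr)$, because the chart map $(x_0,x_2)\mapsto\bigl(x_0,q_1(x_0)+x_2p_1(x_0)\bigr)$ has $x_0$-derivative $\bigl(1,q_1'(\alpha_j)+x_2 p_1'(\alpha_j)\bigr)$ at $x_0=\alpha_j$ and $p_1'(\alpha_j)\neq0$ because $\alpha_j$ is a simple root. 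The key input here is condition $(*)$: via Euler's relation the vector $P_j$ spans the kernel of the $(n+1)\times3$ Jacobian of $F$ at $P_j$, and since its $x_1$-coordinate is nonzero, the $x_0,x_2$-columns stay independent, i.e. the affine Jacobian $\bar J(P_j)=\bigl(\partial_{x_0}\bar F_i,\partial_{x_2}\bar F_i\bigr)_i(P_j)$ still has rank $2$. Consequently some $\nabla\bar F_i(P_j)\neq0$, so $(x_0-\alpha_j)^2\nmid \widetilde G_i$ for that $i$ and no higher power of $p_1$ divides the gcd; since the base locus consists only of these lines there are no other affine common factors, whence $\widehat G|_{x_1=1}=c\,p_1(x_0)$ for a nonzero constant $c$.

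Finally I would conclude that $G_i(x_0,1,x_2)=\widetilde G_i(x_0,1,x_2)/\bigl(c\,p_1(x_0)\bigr)$ has no common affine zero. Off the lines $\{x_0=\alpha_j\}$ a common zero of the $G_i$ would be a common zero of the $\widetilde G_i$, impossible since the affine base locus is exactly those lines; on a line $\{x_0=\alpha_j\}$ the residual value is $G_i(\alpha_j,1,x_2)\propto h_{i,j}(x_2)=\nabla\bar F_i(P_j)\cdot\bigl(1,q_1'(\alpha_j)+x_2 p_1'(\alpha_j)\bigr)$, and the displacement vector $\bigl(1,q_1'(\alpha_j)+x_2 p_1'(\alpha_j)\bigr)$ is nonzero for every $x_2$, so rank $2$ of $\bar J(P_j)$ forces $\bar J(P_j)\cdot\bigl(1,q_1'(\alpha_j)+x_2 p_1'(\alpha_j)\bigr)^{\mathsf T}\neq0$ and hence some $G_i(\alpha_j,1,x_2)\neq0$. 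Thus $G(\_:1:\_)$ has no affine base points, and the statement for $H(\_:\_:1)$ follows by the symmetric argument exchanging the roles of $x_1$ and $x_2$ (and of $B_1$, $B_2$). I expect the main obstacle to be the bookkeeping in the second paragraph: proving that restriction to $x_1=1$ commutes with taking the gcd up to the exact factor $p_1$ (no loss of factors supported at infinity, no spurious higher multiplicity), since this is precisely where condition $(*)$ must be converted from a statement about the homogeneous Jacobian into the order-one vanishing that makes the exceptional lines appear with multiplicity one.
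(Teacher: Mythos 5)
Your proof is correct and follows essentially the same route as the paper's: the key step in both is the first-order expansion of $\widetilde G_i$ along the exceptional line, which exhibits the residual value of $G$ there as the Jacobian of $F$ at $P_j$ applied to a direction vector outside its kernel, nonzero by $(*)$ together with $(a)$ (via Euler's relation). The only difference is bookkeeping: you determine $\widehat G|_{x_1=1}=c\,p_1(x_0)$ exactly, whereas the paper sidesteps this by dividing out a single linear factor and using that each $G_i$ divides the resulting quotient.
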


\begin{proof}
By the properties of the rational map $F:\P^2\DashedArrow[->,densely dashed]\P^n$, defined by $F$, and the fact that a blow up is bijective outside its blown up points, we know that any base point of $G(\_:1:\_)=F\circ\mathrm{Bl}_1(\_:1:\_)$ would be in one of the lines $\mathrm{Bl}_1(P_j)$. Now, we fix $P_j=(1:\alpha_j:\beta_j)$ and we then prove that $G(\_:1:\_)$ has no base points in the line $\{x_0=\frac{1}{\alpha_j}\}\subset\A_1$.

{%
Since all $F_i(x_0:1:x_2)$ vanish at $P_j$, they have $\alpha_jx_0-1$ as a common factor {(note that $\alpha_j\ne0$ because $(a)$ holds)}. Then, since $G_i=\widetilde{G_i}/\widehat{G}$, we have that $G_i(x_0:1:x_2)$ is a divisor of
\begin{multline*}
\overline{G_i}(x_0:1:x_2):=\frac{\widetilde{G_i}(x_0:1:x_2)}{x_0-\frac{1}{\alpha_j}}=\frac{F_i(x_0:1:q_1(x_0)+p_1(x_0)x_2)}{x_0-\frac{1}{\alpha_j}}=\\
=\frac{F_i(x_0:1:q_1(x_0)+p_1(x_0)x_2)-F_i(\frac{1}{\alpha_j}:1:q_1(\frac{1}{\alpha_j})+p_1(\frac{1}{\alpha_j})\frac{\beta}{\alpha_j})}{x_0-\frac{1}{\alpha_j}}=\\
=\frac{F_i(x_0:1:q_1(x_0)+p_1(x_0)x_2)-F_i(\frac{1}{\alpha_j}:1:q_1(\frac{1}{\alpha_j}))}{x_0-\frac{1}{\alpha_j}}.
\end{multline*}
This means that
\begin{multline*}
\overline{G_i}\left(\frac{1}{\alpha_j}:1:x_2\right) =\\
=\frac{\partial F_i}{\partial x_0}\left(\frac{1}{\alpha_j}:1:q_1\left(\frac{1}{\alpha_j}\right)\right)+\frac{\partial F_i}{\partial x_2}\left(\frac{1}{\alpha_j}:1:q_1\left(\frac{1}{\alpha_j}\right)\right)\left(\frac{\partial q_1}{\partial x_0}\left(\frac{1}{\alpha_j}\right)+\frac{\partial p_1}{\partial x_0}\left(\frac{1}{\alpha_j}\right)x_2 \right)=\\
=\left(\begin{array}{ccc}
\frac{\partial F_i}{\partial x_0}(P_j)&
\frac{\partial F_i}{\partial x_1}(P_j)&
\frac{\partial F_i}{\partial x_2}(P_j)
\end{array}\right)\cdot
\left(\begin{array}{c}
1\\0\\\frac{\partial q_1}{\partial x_0}\left(\frac{1}{\alpha_j}\right)+\frac{\partial p_1}{\partial x_0}\left(\frac{1}{\alpha_j}\right)x_2
\end{array}\right).
\end{multline*}
On the other side, the vector $(1,\alpha_j,\beta_j)$ is also in the kernel of the jacobian matrix of $F$ at $P_j$, due to Euler's formula for homogeneous polynomials: $x_0\frac{\partial F}{\partial x_0}+x_1\frac{\partial F}{\partial x_1}+x_2\frac{\partial F}{\partial x_2}=\deg(F)F$.

By $(*)$, the Jacobian matrix of $F$ has rank 2 at $P_j$, so $(1,\alpha_j,\beta_j)$ generates its kernel. Then, $\left(1,0,\frac{\partial q_1}{\partial x_0}\left(\frac{1}{\alpha_j}\right)+\frac{\partial p_1}{\partial x_0}\left(\frac{1}{\alpha_j}\right)x_2\right)$ is not in such kernel, since $\alpha_j\ne0$. Therefore,
\[\overline{G}(\frac{1}{\alpha_j}:1:x_2)=\mathrm{Jac}(F)\cdot\left(\begin{array}{c}
1\\0\\\frac{\partial q_1}{\partial x_0}\left(\frac{1}{\alpha_j}\right)+\frac{\partial p_1}{\partial x_0}\left(\frac{1}{\alpha_j}\right)x_2
\end{array}\right)\ne 0 \]
for all $x_2\in \C$. Since all entries of ${G} (\frac{1}{\alpha_j}:1:x_2 )$ are divisors of those of $\overline{G}(\frac{1}{\alpha_j}:1:x_2)$, then $G$ is nonzero throughout the whole line $x_0=\frac{1}{\alpha_j}$ in $\widetilde{\A_1}$. 
}

Repeating the argument with $H$ finishes the proof.
\end{proof}

The next result states the correctness of Algorithm {\tt 3PatchSurface}:

{
}

\begin{theorem}\label{tma:algoritmo_funciona}
The three parametrizations $F$, $G$ and $H$  output by  Algorithm {\tt 3PatchSurface} satisfy that the union of the images of $F(1:\_:\_)$, $G(\_:1:\_)$ and $H(\_:\_:1)$ covers $S$ completely.
\end{theorem}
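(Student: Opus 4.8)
The plan is to show that every point $Q\in S$ lies in the image of at least one of the three maps $F(1:\_:\_)$, $G(\_:1:\_)$ or $H(\_:\_:1)$. The natural decomposition of the target comes from the source side: by Theorem~\ref{BlowUp} applied to the rational map $F\colon\P^2\DashedArrow[->,densely dashed]S$, there is a composite of blowups $g\colon Y\to\P^2$ and a morphism $h\colon Y\to S$ with $h=F\circ g$. Since $h$ is a morphism on the projective variety $Y$, its image is all of $S$ (the parametrization is dominant, so $\overline{F(\P^2)}=S$ and $h(Y)$ is closed). Thus it suffices to account for the images of every point of $Y$, and I would organize this according to where $g(y)$ lands in $\P^2=\A_0\cup\{x_0=0\}$.

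First I would dispose of the generic part. The three affine planes $\A_0,\A_1,\A_2$ cover $\P^2$, and by assumption~$(a)$ all base points $P_j$ lie in $\A_0\cap\A_1\cap\A_2$. Away from the base locus $\mathcal{A}$ and the contracted locus, $F$ is already defined, so any $Q=F(p)$ with $p\in\A_0$ a point of definition lies in the image of $F(1:\_:\_)$. The content of the theorem is therefore concentrated on the points of $S$ that are \emph{only} reachable as images of the base points or of the line at infinity $L_\infty=\{x_0=0\}$ — precisely the set described in Remark~\ref{rem:dimension1}, namely $h(g^{-1}(\mathcal{A}\cup L_\infty))$, a finite union of rational curves (plus possibly one contracted point). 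The strategy is to show that the blowups $\mathrm{Bl}_1,\mathrm{Bl}_2$ built into $G$ and $H$ recover exactly these leftover points.

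The key step is the following: the maps $\widetilde{G}$ and $\widetilde{H}$ are precisely $F$ precomposed with the standard blowup charts of $\A_1$ and $\A_2$ at the base points, expressed via the Gr\"obner data $p_i,q_i$ from Proposition~\ref{tma:deteccion_de_condiciones}. Concretely, $G(\_:1:\_)$ parametrizes the strict transform chart of $\A_1$ blown up at each $P_j$, so that the exceptional line over $P_j$ maps onto the rational curve $h(g^{-1}(P_j))$ of Corollary~\ref{cor:imagen_racional}. By Lemma~\ref{lma:no_base_pts}, $G(\_:1:\_)$ and $H(\_:\_:1)$ have no affine base points, hence each is a \emph{morphism} on its affine source, so each exceptional fiber is swept out entirely, not just generically. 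This is what lets me upgrade ``covers a dense subset of the exceptional curve'' to ``covers the whole curve.'' I would then check that the line at infinity $L_\infty$ of $\A_0$, including its single possibly-contracted point, is covered: a point of $L_\infty$ either survives into $\A_1$ or $\A_2$ as an ordinary affine point (and is hit by $G$ or $H$ away from the blown-up lines), or it is a coordinate point excluded by $(a)$ and reached through the complementary chart.

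The main obstacle I anticipate is the surjectivity onto the exceptional curves, i.e.\ verifying that as the fiber variable $x_2$ (resp.\ $x_1$) ranges over all of $\K$, the image $G(1/\alpha_j:1:x_2)$ traces the \emph{entire} rational curve $h(g^{-1}(P_j))$ rather than omitting finitely many of its points. The favorable input here is exactly the rank-two Jacobian condition~$(*)$: the computation in Lemma~\ref{lma:no_base_pts} shows $\overline{G}(1/\alpha_j:1:x_2)=\mathrm{Jac}(F)(P_j)\cdot(1,0,\ast)^{t}$ is nonvanishing for every $x_2$, so the restricted map to the exceptional line never meets an indeterminacy and is a genuine nonconstant morphism from $\A^1$ to a rational curve; I would argue its image is the full projective curve by a closure/properness argument on $Y$. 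The remaining bookkeeping is to confirm that the union over all $P_j$ of these exceptional images, together with $F(\A_0)$ and the infinity contributions, exhausts $h(g^{-1}(\mathcal{A}\cup L_\infty))$ and hence all of $S$; the coordinate-genericity hypotheses $(a)$ and $(b)$ guarantee that no point slips between charts.
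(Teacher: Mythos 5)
Your overall architecture is the same as the paper's: blow up the base points, realize $F(1:\_:\_)$, $G(\_:1:\_)$ and $H(\_:\_:1)$ as restrictions of the induced map $\varphi$ on $\mathrm{Bl}_{P_1,\dots,P_k}(\P^2)$ to three affine charts, use Lemma~\ref{lma:no_base_pts} to see that $\varphi$ has no base points, and conclude by closedness of the image of a projective variety under a morphism. But there is a genuine gap at exactly the step you flag as the main obstacle. You claim that $G(1/\alpha_j:1:\_)$ ``maps onto'' the whole rational curve $h(g^{-1}(P_j))$, to be justified ``by a closure/properness argument on $Y$.'' This does not work: the source of that restricted map is the affine line $\{x_0=1/\alpha_j\}\subset\widetilde{\A_1}$, which under the chart inclusion $j_1$ is the exceptional line $E_j\simeq\P^1$ \emph{minus one point} (the direction at $P_j$ of the line joining $P_j$ to $(0:0:1)$). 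A nonconstant morphism from $\A^1$ to a projective curve can miss a point of that curve (e.g.\ $t\mapsto(1:t)$ misses $(0:1)$), and properness is only available for the full projective blowup, not for an affine chart or an affine line. Lemma~\ref{lma:no_base_pts} removes indeterminacies, but it says nothing about this omitted point, so $G$ alone does not cover $h(g^{-1}(P_j))$ in general.

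The paper closes precisely this hole by making the two exceptional charts cooperate on each $E_j$: $j_1(\{x_0=1/\alpha_j\})$ covers $E_j$ minus the direction toward $(0:0:1)$, while $j_2(\{x_0=1/\beta_j\})\subset\widetilde{\A_2}$ is an affine line in $E_j$ missing only the (distinct) direction toward $(0:1:0)$, hence passing through the point that $j_1$ misses; condition $(b)$ is used here to guarantee that these two lines through $P_j$ contain no other base point, so both charts are actually available at $P_j$. With that, one checks set-theoretically that $j_0(\A_0)\cup j_1(\widetilde{\A_1})\cup j_2(\widetilde{\A_2})$ is the entire blowup, and only then does the properness/closed-image argument---applied to the regular morphism $\varphi$ on the projective surface $\mathrm{Bl}_{P_1,\dots,P_k}(\P^2)$---yield $S=F(\A_0)\cup G(\widetilde{\A_1})\cup H(\widetilde{\A_2})$. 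Your write-up needs this chart-interplay step (and the explicit role of hypothesis $(b)$) to be a complete proof.
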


\begin{proof}
We devote these first lines to sketch the proof. Keeping in mind Theorem \ref{BlowUp}, when $(*)$, $(a)$ and $(b)$ hold, we construct the following diagram:
\[ \xy
	(0,20)*+{\mathrm{Bl}_{P_1,...,P_k}(\P^2)}="Y";
	(-15,0)*++{\P^2}="X";
	(15,0)*++{\mathbb{P}^N}="PN";
	{\ar_{\displaystyle \pi} "Y"; "X"};
	{\ar^{\displaystyle \varphi} "Y"; "PN"};
	{\ar@{-->}^{\displaystyle F} "X"; "PN"};
	(-35,0)*++{\mathbb{A}_0}="A0";
	(-25,0)*++{\subset}="c";
	(-40,10)*++{\widetilde{\mathbb{A}_1}}="A1";
	(-42,20)*++{\widetilde{\mathbb{A}_2}}="A2";
	{\ar^{\displaystyle j_0} "A0"; "Y"};
	{\ar^{\displaystyle j_1} "A1"; "Y"};
	{\ar^{\displaystyle j_2} "A2"; "Y"};
\endxy \]
Here, $\mathrm{Bl}_{P_1,...,P_k}(\P^2)$ is the blowup of the base points. To prove that the output of Algorithm {\tt 3PatchSurface} works as expected, we need to show that
\begin{itemize}
\item $F(1:\_:\_)=\varphi\circ j_0$, $G(\_:1:\_)=\varphi\circ j_1$ and $H(\_:\_:1)=\varphi\circ j_2$.
\item $j_0(\A_0)\cup j_1(\widetilde{\A_1})\cup j_2(\widetilde{\A_2})=\mathrm{Bl}_{P_1,...,P_k}(\P^2)$.
\item $\varphi$ has no base points.
\end{itemize}
In this situation, we observe that, since $\varphi$ has no base points, then it is a regular morphism, so $\varphi(\mathrm{Bl}_{P_1,...,P_k}(\P^2))$ is an algebraic subset of $\P^n$. On the other side, $F$ is dominant over $S$, so
\[S=\varphi(\mathrm{Bl}_{P_1,...,P_k}(\P^2))=F(\A_0)\cup G(\widetilde{\A_1})\cup H(\widetilde{\A_2}),\]
and, hence, the theorem holds.
First of all, we define $j_1$ and $j_2$. The blow up of all the base points of $F$ is, locally, the blow up of the ideal $I_i$ in $\mathbb{A}_i$ with $i$ being 1 or 2. Knowing the bases of $I_1$ and $I_2$, we have {(see \cite[Section 4.2]{Shafarevich_2nd_1})}:
\[\mathrm{Bl}_{I_1}(\mathbb{A}_1)=\left\{
((x_0:1:x_2),(y_0:y_1))\ |\ p_1(x_0)y_0=(x_2-q_1(x_0))y_1
\right\}\subset\mathbb{A}_1\times\P^1,\]
\[\mathrm{Bl}_{I_2}(\mathbb{A}_2)=\left\{
((x_0:x_1:1),(y_0:y_1))\ |\ p_2(x_0)y_0=(x_1-q_2(x_0))y_1
\right\}\subset\mathbb{A}_2\times\P^1.\]
While the way to glue the two open subsets is not interesting for the purpose of this proof, it is easy to see that  $\mathrm{Bl}_{I_1}(\mathbb{A}_1)\cup \mathrm{Bl}_{I_2}(\mathbb{A}_2)$ is the whole Bl$_{\{P_1,...,P_k\}}(\P^2)$ minus the point that is the strict transform of $(1:0:0)$. We now consider the inclusions
\[\begin{array}{rccc}
j_1:&\widetilde{\mathbb{A}_1}&\to&\mathrm{Bl}_{I_1}(\mathbb{A}_1)\\
&(x_0,x_2)&\mapsto&((x_0:1:q_1(x_0)+x_2p_1(x_0)),(x_2:1))
\end{array},\]
\[\begin{array}{rccc}
j_2:&\widetilde{\mathbb{A}_2}&\to&\mathrm{Bl}_{I_2}(\mathbb{A}_2)\\
&(x_0,x_1)&\mapsto&((x_0:q_2(x_0)+x_1p_2(x_0):1),(x_1:1))
\end{array}.\]
If $\pi_i:\mathrm{Bl}_{I_i}(\mathbb{A}_i)\to \mathbb{A}_i$ is the first projection in any of the cases, it is clear that Bl$_i=\pi_i\circ j_i$ is defined by
\[\mathrm{Bl}_1(x_0,x_2)=(x_0,q_1(x_0)+x_2p_1(x_0))\mbox{ and } \mathrm{Bl}_2(x_0,x_1)=(x_0,q_2(x_0)+x_1p_2(x_0)).\]
Therefore, we have that
 \[G(x_0:1:x_2)=F(\mathrm{Bl}_1(x_0,x_2))\mbox{ and } H(x_0:x_1:1)=F(\mathrm{Bl}_2(x_0,x_1)).\]

On the other hand, $\widetilde{A_i}$ covers the whole $\A_i$ except the vertical lines through the base points. These affine lines are completely contained in $\A_0$, since the infinity point is $(0:0:1)$ for $\A_1$ and $(0:1:0)$ for $\A_2$. This means that, to show that $\widetilde{\mathbb{A}_1}$, $\widetilde{\mathbb{A}_2}$ and $\mathbb{A}_0\backslash\{P_1,...,P_k\}$ ---through $j_1$, $j_2$ and the blowup $j_0$ of the base points--- cover $\mathrm{Bl}_{\{P_1,...,P_k\}}(\P^2)$, we just need to prove that $\widetilde{\A_1}$ and $\widetilde{\A_2}$ cover the exceptional divisor. So we fix $P_j=(1:\alpha_j:\beta_j)$ and we call $E_j\simeq\P^1$ the component of the exceptional divisor corresponding to $P_j$. Note that Bl$_i$ covers a full neighborhood of $P_j$ in $\mathbb{A}_i$ minus the vertical line. This corresponds to the line joining $P_j$ with $(0:0:1)$ for the case $i=1$ and the line joining $P_j$ with $(0:1:0)$ for the case $i=2$. By condition $(b)$, these lines do not contain other base points. These two lines represent two different directions at $P_j$ (i.e. two different points in $E_j$). This means that $j_1(\{x_0=\frac{1}{\alpha_j}\})$ covers all $E_j$ except one point and that $j_2(\{x_0=\frac{1}{\beta_j}\})$ is an affine line passing through that point.

The only task remaining is proving that $\varphi$ has no base points. Such base points would be in the exceptional divisor, which is covered by $\widetilde{\A_1}$ and $\widetilde{\A_2}$, but Lemma \ref{lma:no_base_pts} states that there are no base points of $\varphi$ in $j_1(\widetilde{\A_1})\cup j_2(\widetilde{\A_2})$.
\end{proof}

\section{Examples}\label{sec-example}

This section is devoted to illustrating Algorithm {\tt 3PatchSurface} by examples. We start with a toy example in which we explicitly show that the three parametrizations cover the whole projective surface.

\begin{example}\label{ex:whitney}
Whitney's umbrella has implicit equation $y_0y_1^2-y_2^2y_3=0$. The usual parametrization $(x_0:x_1:x_2)\mapsto(x_0^2:x_1x_2:x_0x_1:x_2^2)$ does not satisfy condition $(a)$, so we change coordinates to get a new parametrization
\begin{multline*}
F(x_0:x_1:x_2)=(x_2^2 - 2x_0x_2 + x_0^2:
x_1^2 + x_1x_2 - x_0x_1 - x_0x_2:\\
-x_1x_2 + x_0x_1 + x_0x_2 - x_0^2:
x_1^2 + 2x_1x_2 + x_2^2).
\end{multline*}
The only base point of the new parametrization is $(1:-1:1)$. We then compute the bases
\[B_1=\{x_2+1,x_1-1,x_0+1\}\mbox{ and }
B_2=\{x_2-1,x_1+1,x_0-1\}.\]
So we get:
\[\widetilde{G}(x_0:x_1:x_2)=(x_0+x_1)G(x_0:x_1:x_2)\mbox{ and }\widetilde{H}(x_0:x_1:x_2)=(x_2-x_0)H(x_0:x_1:x_2)\]
where
\begin{multline*}
G=
(x_1^3 - 2x_1^2x_2 + x_1^2x_0 + x_1x_2^2 - 2x_1x_2x_0 + x_2^2x_0:
x_1^2x_2 - x_1x_2x_0:\\
x_1^3 - x_1^2x_2 - x_1^2x_0 + x_1x_2x_0: x_1x_2^2 + x_2^2x_0)
\end{multline*}
and
\begin{multline*}
H=
(x_2^3 - x_2^2x_0:
x_1^2x_2 - x_1^2x_0 + x_1x_2^2 + x_0x_1x_2:
x_1x_2^2 - x_0x_1x_2 + x_2^3 + x_0x_2^2:
x_1^2x_2 - x_0x_1^2).
\end{multline*}

We now prove that the whole surface is covered by $F(1:\_:\_)$, $G(\_:1:\_)$ and $H(\_:\_:1)$. Let $A=(y_0:y_1:y_2:y_3)$ be such that $y_0y_1^2-y_2^2y_3=0$. Then:
\begin{enumerate}
\item if $y_0y_2( y_0y_1+y_0y_2-y_2^2)\ne0$, then, taking $y_0=1$,  
\[A=F\left(1: \frac{y_2+y_1+y_2^2}{y_2+y_1-y_2^2}: \frac{-y_2+y_1-y_2^2}{y_2+y_1-y_2^2}\right)\]
Note here that $y_3=\frac{y_1^2}{y_2^2}$, due to the equation of the umbrella and $y_0=1$.
\item if $y_0y_1+y_0y_2-y_2^2=0$ and $y_0y_2\ne0$, taking $y_0=1$, we get $y_1=y_2^2-y_2$. This equality transforms the equation of the umbrella in $y_3=(y_2-1)^2$. Then, {$A=G(0:1:1-1/y_2)$}.
\item if $y_2=0$ and $y_0\ne0$, then,  taking $y_0=1$,
\[A=F\left(1:1:\frac{-1+\sqrt{y_3}}{1+\sqrt{y_3}}\right).\]
{Note here that every $A$ in this case is gotten twice. Even $(1:0:0:1)$ is both $F(1:1:0)$ and $H(0:-1:1)$.}
\item if $y_0=0$, the infinity hyperplane section of Whitney's umbrella has two components: $y_2=0$ and $y_3=0$. Then:
\[A=(0:y_1:0:y_3)=F\left(1:0:\frac{(y_3+y_1)}{(y_3-y_1)}\right)\]
when $y_3- y_1\ne0$, and $A=(0:1:0:1)=G(0:1:1)$. Moreover,
\[A=(0:y_1:y_2:0)=G\left(-1:y_1+y_2:y_1\right)\]
when $y_1+y_2\ne0$. Finally, $A=(0:-1:1:0)=H(1:-1:1)$.
\end{enumerate}
Observe that $F(1:\_:\_)$ covers the whole umbrella minus a couple of rational curves. Then, $G(\_:1:\_)$ covers these curves minus just a point, that is covered by $H(\_:\_:1)$.
\end{example}

The following example applies Algorithm {\tt 3PatchSurface} to a classic surface. While the computation time is not long, the output is too large, so we just sketch some computations.

\begin{example}\label{ex:cubica}
Let us consider the Clebsch cubic, given by the equation
\[z_0^3+z_1^3+z_2^3+z_3^3-(z_0+z_1+z_2+z_3)^3=0.\]
 A parametrization is defined by
\begin{multline*}
(- x_0^2x_1+x_0^2x_2+x_0x_1^2-x_0x_2^2:
x_0^3  - x_0^2x_1- x_0^2x_2 + x_1x_2^2:\\
- x_0^3+ x_0^2x_1+ x_0^2x_2-x_1^2x_2:
- x_0x_1^2 + x_0^2x_1-x_1x_2^2 ).
\end{multline*}
This parametrization, however, does not satisfy conditions $(a)$ and $(b)$. After the coordinate change in $\P^2$ given by
\[(x_0:x_1:x_2)\mapsto(x_0+3x_1+2x_2:x_0+x_1+3x_2:-x_0-x_1+x_2),\]
we get the parametrization $F=(F_0:F_1:F_2:F_3)$, where
\[
F_0=8x_2^3 + 8x_2^2x_1 + 8x_2^2x_0 - 18x_2x_1^2 - 12x_2x_1x_0 - 2x_2x_0^2 - 18x_1^3 - 30x_1^2x_0 - 14x_1x_0^2 - 2x_0^3,\]
\[F_1= -5x_2^3 - 17x_2^2x_1 - 9x_2^2x_0 + 19x_2x_1^2 + 14x_2x_1x_0 + 3x_2x_0^2 + 28x_1^3 + 30x_1^2x_0 + 12x_1x_0^2 + 2x_0^3,\]
\[F_2= -x_2^3 + 15x_2^2x_1 + 7x_2^2x_0 - 13x_2x_1^2 - 2x_2x_1x_0 + 3x_2x_0^2 - 26x_1^3 - 24x_1^2x_0 - 6x_1x_0^2,\]
\[F_3= -9x_2^3 + 6x_2^2x_1 + 18x_2x_1^2 + 4x_2x_1x_0 - 2x_2x_0^2 + 5x_1^3 + 5x_1^2x_0 - x_1x_0^2 - x_0^3.
\]
One can check that the base points are $P_1=(5:2:-3)$, $P_2=(11:-2:-5)$, $P_3=(7:-2:-1)$, $P_4=(1:2:1)$, $P_{5,6}=(\pm2-\sqrt{5}:2:\sqrt{5})$. This agrees with the well known fact that a cubic smooth surface is a plane blown up at 6 general points.

Now we take $x_1=1$ or $x_2=1$ and get the two Gr\"obner bases $B_i=\{x_i-1,x_{3-i}-q_i(x_0),p_i(x_0)\}$, where $i\in\{1,2\}$ and:
\[p_1(x_0)=x_0^6 + 8x_0^5 + \frac{21}{4}x_0^4 - 61x_0^3 - \frac{1077}{16}x_0^2 + \frac{239}{4}x_0 - \frac{385}{64},\]
\[q_1(x_0)=-\frac{393}{8360}x_0^5 - \frac{18511}{50160}x_0^4 - \frac{12941}{50160}x_0^3 + \frac{13537}{5280}x_0^2 + \frac{1123141}{401280}x_0 - \frac{21649}{14592},\]
\[p_2(x_0)=x_0^6 + \frac{178}{15}x_0^5 + \frac{199}{5}x_0^4 + \frac{916}{25}x_0^3 - \frac{2387}{75}x_0^2 - \frac{3926}{75}x_0 - \frac{77}{15},\]
\[q_2(x_0)=-\frac{7075}{11264}x_0^5 - \frac{212195}{33792}x_0^4 - \frac{221885}{16896}x_0^3 + \frac{49613}{16896}x_0^2 + \frac{612691}{33792}x_0 + \frac{2987}{3072}.\]
Performing substitutions
\[\widetilde{G}(x_0:1:x_2) = F\left(x_0:1:q_1(x_0)+x_2p_1(x_0)\right)\]
and
\[\widetilde{H}(x_0:x_1:1) = F\left(x_0:q_2(x_0)+x_1p_2(x_0):1\right)\]
then dividing by the gcd of all entries in each case produces two   parametrizations of degree 15, $G(x_0:1:x_2)$ and $H(x_0:x_1:1)$, with about $45$ coefficients per polynomial and about $70$ bits per coefficient,  that cover, together with $F(1:x_1:x_2)$, the whole cubic.
\end{example}

\section{The affine case}\label{sec-affine}

In this section, we slightly change our point of view and we consider the problem of covering a rational affine surface by means of the images of several affine parametrizations. So, in the sequel we consider that we are given $F$ and $S$ as in Section \ref{sec-affine}, and we deal with the problem of covering  $S\cap\A^n$, where $\A^n$ is the open subset of $\P^n$ defined by the first variable not vanishing. Equivalently, one may consider that we are indeed given an affine parametrization and the affine surface that it defines. Nevertheless, to be consistent with the notation used throughout the paper, we will use the first notational statement of the problem.

In this section, we prove that to cover a rational affine surface, only two patches are necessary (see Theorem \ref{tma:algoritmo_afin_chufla} and Corollary \ref{cor:1_parche}). The basic idea is as follows. The given parametrization $F(1:x_1:x_2)$ covers a constructible subset. The complement of such subset is contained in the image of $L_\infty$ (that is, $F(0:x_1:x_2)$) and the base points, which is a finite union of affine rational curves (see Corollary \ref{cor:imagen_racional}) and, maybe, an isolated point corresponding to a contracted $L_\infty$.  The parametrization $G(x_0:1:x_2)$ of Algorithm {\tt 3PatchSurface}, restricted to certain vertical lines, covers all such affine curves except at most one point. Since such curves also have a point at infinity, we want such point to be the image of the point at infinity of the parameter line.  Based on these ideas, we derive  Algorithm {\tt 2PatchForAffine} that, when the original parametrization satisfies $(*)$, $(a)$ and $(b)$, covers affine surfaces using just two parametrizations.	

\begin{algorithm}{{\bf Algorithm} {\tt 2PatchForAffine}}
\begin{algorithmic}[1]
\REQUIRE A list $F=(F_0:\cdots:F_n)$ of coprime homogeneous polynomials of the same degree in $\mathbb{K}[x_0,x_1,x_2]$, parametrizing a Zariski dense subset of a projective surface $S\subset\P^n_\C$, such that conditions $(*)$, $(a)$ and $(b)$ are satisfied.
\ENSURE A list $g'=(g_0':\cdots:g_n')$ of rational functions of two variables such that
$F(1:\_:\_)$ and $g'(\_,\_)$ cover $S\cap\A^n$ .
\STATE Compute $B_1=\{x_2-q_1(x_0),x_1-1,p_1(x_0)\}$ and $G=(G_0:\cdots:G_n)$ as in Algorithm $\mathtt{3PatchSurface}$.
\FOR{$\alpha$ root of $p_1(x_0)$ \OR $\alpha=0$}
  \IF {deg $G_0(\alpha:1:x_2)<$ max$\{$deg $G_i(\alpha:1:x_2)\ |\ i=1,...,n\}$ \OR $G_0(\alpha:1:x_2)$ is constant}
    \STATE Include $\alpha$ in set $A$ and $\beta_\alpha:=\infty$. 
    \ELSE
    \STATE Include $\alpha$ in set $B$. Choose $\beta_\alpha$ among the roots of $G_0(\alpha:1:x_2)$.
  \ENDIF
\ENDFOR
\STATE Let $s(x_0)$ be a polynomial vanishing at all the $\alpha\in  A$ and not vanishing at any of the $\alpha\in B$. See Remark \ref{rem:calcular_s} below, for suggestions on how to find one.
\STATE Choose $r(x_0)$, a polynomial such that $r(\alpha)=\beta_\alpha s(\alpha)$ for all $\alpha\in B$ and $r(\alpha)\ne0$ for all $\alpha\in A$.
\STATE Find Bezout coefficients $u$ and $v$ such that  $\gcd(r,s)=u\cdot r+v \cdot s$.
\RETURN $g'(x_0,x_2)=G\left(x_0:1:\dfrac{r(x_0)x_2+v(x_0)}{s(x_0)x_2-u(x_0)}\right)$. 
\end{algorithmic}
\end{algorithm}

\begin{remark}\label{rem:calcular_s} \ Let us comment some computational aspects of Algorithm {\tt 2PatchForAffine}.
\begin{enumerate}
\item For $s(x_0)$, one may proceed as follows: collect the coefficients of $G_0(x_0:1:x_2)$ for $x_2$ except the one for $x_2^0$, and compute the gcd, $d(x_0)$, of these coefficients; then $s(x_0)=\gcd(d(x_0),x_0 p_1(x_0))$.
\item
Note that Algorithm {\tt 3PatchSurface} does not extend the field that is used to define $F$, However, Algorithm {\tt 2PatchForAffine} needs to consider possibly algebraic coordinates for the points that need to be sent to the infinity in the parameter plane $\A_1$. Example \ref{ex:2parches} shows that the application of the algorithm forces the usage of algebraic coefficients.
\end{enumerate}
\end{remark}

In order to state the correctness of Algorithm {\tt 2PatchForAffine}, we start with a technical lemma.

\begin{lemma}\label{lma:G(beta=infty}
In the setting of Algorithm {\tt 2PatchForAffine}, if no component of $G(\alpha:1:x_2)$ is constant, it holds that
\begin{enumerate}
\item If $\alpha\in A$, then $\lim_{x_2\to\infty}G(\alpha:1:x_2)$ is a point at infinity.
\item If $\alpha\in B$, then $G(\alpha:1:\beta_\alpha)$ is a point at infinity.
\end{enumerate}
\end{lemma}

\begin{proof}
Let $\alpha$ be an element of $A$. If $G_0(\alpha:1:x_0)$ is identically zero, then the result is obvious. Otherwise, the map   $(\frac{G_1}{G_0}(\alpha:1:x_2),...,\frac{G_n}{G_0}(\alpha:1:x_2))$, from the affine plane to the surface, has degree strictly higher in the numerator of at least one of its components, because $\deg(G_0(\alpha:1:x_2))<\max\{\deg(G_i(\alpha:1:x_2))\ |\ i=1,...,n\}$. So the limit, when $x_2$ tends to $\infty$, is at infinity. Note that the full $(n+1)-$tuple $G(\alpha:1:x_2)$ is not constant, so the case of constant first entry $G_0(\alpha:1:x_2)$ satisfies the inequality too.

Now, let $\alpha$ be an element of $B$. Then, $G_0(\alpha:1:\beta_\alpha)=0$, so the statement holds.
\end{proof}

Let us, now, prove that Algorithm {\tt 2PatchForAffine} works as expected.

\begin{theorem}\label{tma:algoritmo_afin_chufla}
Let $F$, $S$ and $g'$ be as in Algorithm {\tt 2PatchForAffine}. Then $F(1:x_1:x_2)$ and $g'(x_0,x_2)$ cover the whole $S\cap\{y_0\ne0\}$.
\end{theorem}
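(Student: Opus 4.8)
The plan is to prove the inclusion $(S\cap\{y_0\ne0\})\setminus F(1:\_:\_)\subseteq g'(\C^2)$, which is equivalent to the asserted cover. First I would identify the complement precisely. By Theorem \ref{tma:algoritmo_funciona}, together with Remark \ref{rem:dimension1} and Corollary \ref{cor:imagen_racional}, every point of $S$ outside $F(\A_0)=F(1:\_:\_)$ lies in the image under the blow-up morphism $\varphi$ of the strict transform of $L_\infty$ or of one of the exceptional divisors $E_j$ over the base points $P_j=(1:\alpha_j:\beta_j)$. From the vertical-line analysis inside the proof of Theorem \ref{tma:algoritmo_funciona} ($\varphi\circ j_1=G(\_:1:\_)$ and $j_1(\{x_0=1/\alpha_j\})$ covers $E_j$ minus a point), the image $\varphi(E_j)$ is exactly the Zariski closure $C_\alpha:=\overline{G(\alpha:1:\C)}$ with $\alpha=1/\alpha_j$ a root of $p_1$; likewise $\varphi$ carries the strict transform of $L_\infty$ into $C_0$ up to finitely many points already lying on the $C_{1/\alpha_j}$, the boundary point $F(0:0:1)=\lim_{x_2\to\infty}G(0:1:x_2)$ itself belonging to $C_0$. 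Hence the complement sits inside $\bigcup_\alpha C_\alpha$, where $\alpha$ ranges over exactly the values visited by the FOR loop, namely the roots of $p_1$ and $\alpha=0$. It then suffices to show $g'(\alpha,\C)\supseteq C_\alpha\cap\{y_0\ne0\}$ for each such $\alpha$.

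For this per-$\alpha$ statement I would proceed as follows. By Lemma \ref{lma:no_base_pts}, $G(\_:1:\_)$ has no affine base points, so the coordinates of $G(\alpha:1:x_2)$ have no common factor in $\C[x_2]$ and define a morphism $\A^1\to\P^n$ that extends to a morphism $\P^1\to\P^n$ with image $C_\alpha$; thus $G(\alpha:1:\C)$ covers all of $C_\alpha$ except possibly the single point reached as $x_2\to\infty$. Composing with $M_\alpha(x_2)=\frac{r(\alpha)x_2+v(\alpha)}{s(\alpha)x_2-u(\alpha)}$, a bijection of $\P^1$ taking $\infty$ to $M_\alpha(\infty)$, the patch $g'(\alpha,\C)=G(\alpha:1:M_\alpha(\C))$ covers $C_\alpha$ except possibly $G(\alpha:1:M_\alpha(\infty))$. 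The construction of $r,s,u,v$ forces this lone possibly-missed point to lie at infinity: for $\alpha\in A$ one has $s(\alpha)=0$, so $M_\alpha(\infty)=\infty$ and part (1) of Lemma \ref{lma:G(beta=infty} places $\lim_{x_2\to\infty}G(\alpha:1:x_2)$ in $\{y_0=0\}$; for $\alpha\in B$ one has $M_\alpha(\infty)=r(\alpha)/s(\alpha)=\beta_\alpha$, a root of $G_0(\alpha:1:x_2)$, and part (2) of Lemma \ref{lma:G(beta=infty} places $G(\alpha:1:\beta_\alpha)$ in $\{y_0=0\}$. In both cases $g'(\alpha,\C)\supseteq C_\alpha\setminus\{y_0=0\}\supseteq C_\alpha\cap\{y_0\ne0\}$, and combined with the first paragraph this proves the theorem.

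The step I expect to be the main obstacle is certifying that $M_\alpha$ is a genuine, non-degenerate Möbius transformation at every relevant $\alpha$, since the bijection argument collapses otherwise. I would handle this via the Bezout identity $u\,r+v\,s=\gcd(r,s)$, which makes the determinant of $M_\alpha$ equal to $-\gcd(r,s)(\alpha)$; this is nonzero because $r(\alpha)\ne0$ for $\alpha\in A$ and $s(\alpha)\ne0$ for $\alpha\in B$ each prevent $\alpha$ from being a common root of $r$ and $s$ (and for $\alpha\in A$ it additionally forces $u(\alpha)\ne0$, so $M_\alpha$ is a true affine bijection fixing $\infty$). A second delicate point lies outside the scope of Lemma \ref{lma:G(beta=infty}: the degenerate case where $G(\alpha:1:\_)$ is constant, i.e. a curve contracted to a point, which is precisely the situation of a contracted $L_\infty$ yielding an isolated point of $S$. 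I would dispose of it directly, noting that the algorithm then assigns $\alpha$ to $A$ through its ``$G_0$ constant'' branch and that $g'(\alpha,x_2)$ equals that constant point, hence covers it whether it is affine or at infinity. Finally I would confirm that the data required by the algorithm exist (finiteness of the loop and the existence of an interpolating $r$ and a separating $s$ with the stated vanishing conditions), so that $g'$ is well defined on the relevant lines.
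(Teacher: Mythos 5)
Your proof is correct and follows essentially the same route as the paper's: reduce the complement of $F(1:\_:\_)$ to the curves $C_\alpha=\overline{G(\alpha:1:\C)}$ for $\alpha$ a root of $p_1$ or $\alpha=0$, extend the restriction to these lines to a morphism on $\P^1$ whose image is closed, and invoke Lemma \ref{lma:G(beta=infty} to place the single possibly-missed point (the image of $\infty$ under the M\"obius reparametrization) on the hyperplane at infinity. Your additional checks --- nondegeneracy of $M_\alpha$ via the Bezout identity and the contracted-line case --- are details the paper leaves implicit, but they do not change the argument.
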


\begin{proof}
Since the input of Algorithm {\tt 2PatchForAffine} and  Algorithm {\tt 3PatchSurface} are the same, by Theorem \ref{tma:algoritmo_funciona}, one deduces that  $F(1:x_1:x_2)$, $G(x_0:1:x_2)$ and $H(x_0:x_1:1)$ cover the projective surface $S$.

Taking into account how $G$ and $H$ in Algorithm {\tt 3PatchSurface} are defined, any point in $\widetilde{\A_1}$ and $\widetilde{\A_2}$, not in $L_\infty$ or in $\{p_i(x_0)=0\}$, is sent  by $G(x_0:1:x_2)$ or $H(x_0:x_1:1)$ into the image by $F$ of a point in $\A_0$. Therefore, we need to check that $g'$ covers any affine point in $G(\{x_0=0\}\cup\{p_1(x_0)=0\})\cup H(\{x_0=0\}\cup\{p_2(x_0)=0\})$.

Any component $C$ of $G(\{x_0=0\}\cup\{p_1(x_0)=0\})\cup H(\{x_0=0\}\cup\{p_2(x_0)=0\})$ is either a point or a rational curve covered by $G(\alpha:1:x_2)\cup H(\alpha':x_1:1)$, where either $\alpha=\alpha'=0$ or $(1:\frac{1}{\alpha}:\frac{1}{\alpha'})$ is a base point of $F$. Moreover, such component is the Zariski closure of the image of the restriction $G|_{\{x_0=\alpha,x_1\ne0\}}$, which coincides with the Zariski closure of $g'|_{\{x_0=\alpha\}}$.

If $C$ is just a point, then it is covered by $g'|_{\{x_0=\alpha\}}$. Otherwise, it is well known that any morphism defined in an open subset of a projective smooth curve can be extended regularly to the whole curve and that the image of a projective curve by a regular morphism is a Zariski closed subset. Then, we can extend $g'|_{\{x_0=\alpha\}}$ to the Zariski closure of the line where it is defined and we would cover completely $C$. This means that $g'|_{\{x_0=\alpha\}}$ covers all $C$  minus, at most, just a point (the image of the infinity point of the affine line). However, this point is $G(\alpha:1:\beta_\alpha)$, which is at infinity by Lemma \ref{lma:G(beta=infty}. Therefore, any point in $C\cap\{y_0\ne0\}$ is in $g'(\{x_0=\alpha\})\subset g'(\A^2)$.
\end{proof}

\begin{example}\label{ex:2parches}
Consider the following projective transformation of the Veronese morphism:
\[\begin{array}{c@{}ccc}
F:&\P^2&\to&\P^5\\
&(x_0:x_1:x_2)&\mapsto& (x_0^2+x_1^2+x_2^2:x_0x_1: x_0x_2:x_1^2:x_1x_2:x_2^2)
\end{array} \]
Since there are no base points, Algorithm {\tt 2PatchForAffine} generates $G=F$, and then, for just $\alpha=0$, it computes $\beta_0$ such that $\beta_0^2+1=0$. This means that $\beta_0$ must be imaginary, so it is not rational. The output
\[g'(x_0,x_2)=G\left(x_0:1:\frac{ix_2+1}{x_2}\right)\]
has imaginary coefficients.

We observe that any choice of two rationally defined affine planes $\A_0$ and $\A_1$ of the projective plane will leave a point $P$ in the projective plane over $\mathbb{Q}$ out of the union, and then $F(P)$, which is not at infinity, will not be covered. Observe, however, that the surface is isomorphic to the projective plane, so one can compose the two maps appearing in Example \ref{ex-intro} with the Veronese morphism to cover, not just the affine part, but the whole projective surface without extending the field.
\end{example}

\begin{example}\label{ex:cubica_afin_2_parches}
Let us again consider the cubic of Example \ref{ex:cubica}. We recall that
\[p_1(x_0)=x_0^6 + 8x_0^5 + \frac{21}{4}x_0^4 - 61x_0^3 - \frac{1077}{16}x_0^2 + \frac{239}{4}x_0 - \frac{385}{64}.\]
One can factor $G_0$, as obtained in Example \ref{ex:cubica}, and one of the factors is
\begin{multline*}
200640x_2x_0^4 + 1203840x_2x_0^3 - 1304160x_2x_0^2 - 9329760x_2x_0 +\\
4827900x_2 - 9432x_0^3 - 55180x_0^2 + 56238x_0 + 388135.
\end{multline*}
Here, one can easily get $x_2$ as a rational function on $x_0$, so we have a point going to infinity at each vertical line:
\begin{equation}\label{eq:beta_alfa}
x_2=\beta_{x_0}=\frac{9432x_0^3 + 55180x_0^2 - 56238x_0 - 388135}{200640x_0^4 + 1203840x_0^3 - 1304160x_0^2 - 9329760x_0 + 4827900}.
\end{equation}
The set $A$ is given by the common roots of $p_1$ and the denominator in \eqref{eq:beta_alfa}, so
\[s(x_0)=\gcd(p_1,\mathrm{denominator}(\beta_{x_0}))
=16x_0^4+96x_0^3-104x_0^2-744x_0+385.\]
We need a polynomial $r(x_0)$, coprime with $s(x_0)$, whose values at the roots of $\frac{x_0p_1(x_0)}{s(x_0)}$ equal $\beta_{x_0}s(x_0)$. Such roots are $0$ and $1\pm\frac{\sqrt{5}}{2}$, and the interpolating polynomial
\[r(x_0)=\frac{18511}{3135}x_0^2 - \frac{898}{209}x_0 - \frac{7057}{228}\]
works. We now need a Bezout identity $ur+vs=1$, so we get
\begin{multline*}
u(x_0)=-\frac{26198287461}{21693258568709}x_0^3 - \frac{1732666485971}{130159551412254}x_0^2 -\\
\frac{487041584557}{12396147753548}x_0 - \frac{632041996387}{74376886521288}
\end{multline*}
and
\[v(x_0)=\frac{1510767910251}{3389825077278640}x_0 + \frac{357074564524303}{186537231395390304}.\]
Then, the images of $F(1:x_1:x_2)$ and $G\left(x_0:1:\dfrac{r(x_0)x_2+v(x_0)}{s(x_0)x_2-u(x_0)}\right)$ cover the whole affine cubic.
\end{example}

In \cite{MEGA_2017}, it is proved that there exist affine surfaces that cannot be covered by means of a unique map from the affine plane. In fact, the surface in Example \ref{ex:cubica_afin_2_parches} is proved to be one of them. Now, the following  corollary of Theorem \ref{tma:algoritmo_afin_chufla} shows that, under hypotheses (*), (a), (b), one can always cover the affine surface with two affine parametrization images.

\begin{corollary}\label{cor:1_parche}
Let $S$ be an affine surface such that there exists a parametrization $f:\A^2\DashedArrow[->,densely dashed]S$ with a projectivization $F$ satisfying $(*)$, $(a)$ and $(b)$. Then $S$ can be covered with just two parametrizations.
\end{corollary}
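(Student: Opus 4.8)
The plan is to derive Corollary \ref{cor:1_parche} directly from Theorem \ref{tma:algoritmo_afin_chufla} by a routine translation between the ``projective input / affine target'' language used in the theorem and the purely affine language of the corollary. First I would observe that the hypotheses match up almost verbatim: the corollary hands us an affine parametrization $f:\A^2\DashedArrow[->,densely dashed]S$ whose projectivization $F$ satisfies $(*)$, $(a)$ and $(b)$, which are exactly the conditions required on the input of Algorithm {\tt 2PatchForAffine}. So the first step is simply to recognize that $F$ is a legitimate input for that algorithm.

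Next I would run Algorithm {\tt 2PatchForAffine} on $F$ to produce the tuple $g'=(g_0':\cdots:g_n')$, and invoke Theorem \ref{tma:algoritmo_afin_chufla}, which guarantees that the images of $F(1:x_1:x_2)$ and $g'(x_0,x_2)$ together cover $S\cap\{y_0\ne0\}$. The only remaining bookkeeping is to identify $S\cap\{y_0\ne0\}$ with the affine surface $S$ of the corollary: by the conventions fixed at the start of Section \ref{sec-affine}, $\A^n$ is precisely the chart $\{y_0\ne0\}$ of $\P^n$, and the affine surface parametrized by $f$ is $S\cap\A^n$. Thus the two maps $F(1:\_:\_)=f$ and $g'$, both regarded as rational maps $\A^2\DashedArrow[->,densely dashed]S$, have images whose union is all of $S$.

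I would then close by noting that this exhibits exactly two parametrizations of $\A^2$ covering $S$, which is the assertion. The subtle point — and really the only thing worth spelling out — is that $g'(x_0,x_2)$, although written with the extra variable name $x_0$, is genuinely a map from the affine plane $\A^2$ (with coordinates $(x_0,x_2)$) and not from some blown-up or higher-dimensional space; this is clear from its definition as $G\!\left(x_0:1:\frac{r(x_0)x_2+v(x_0)}{s(x_0)x_2-u(x_0)}\right)$, a tuple of rational functions in the two variables $x_0,x_2$. Because this is a pure restatement, there is no genuine mathematical obstacle here; the entire content was already proved in Theorem \ref{tma:algoritmo_afin_chufla}, and the corollary serves to record the conclusion in the intrinsic affine formulation, free of the auxiliary ``$y_0\ne0$'' notation.

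I expect the main (and very minor) obstacle to be purely expository: making sure the reader sees that the field is not extended gratuitously when it need not be, and that the count of ``two'' is correct even though one of the two maps, $g'$, may require algebraic coefficients (as Example \ref{ex:2parches} illustrates). Neither of these affects the validity of the covering statement, so the proof reduces to a one-line appeal to Theorem \ref{tma:algoritmo_afin_chufla} together with the identification $S\cap\A^n = S$.
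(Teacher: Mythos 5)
Your proposal is correct and coincides with the paper's own (implicit) justification: the corollary is stated as an immediate consequence of Theorem \ref{tma:algoritmo_afin_chufla}, obtained exactly as you describe by running Algorithm {\tt 2PatchForAffine} on $F$ and identifying $S\cap\{y_0\ne0\}$ with the affine surface parametrized by $f$. Nothing further is needed.
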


In order to prove that two affine patches are enough we have had to impose, to the projectivization of the input affine parametrization, hypotheses (*), (a) and (b). If we do not impose (*), we cannot ensure this general result. However, it is interesting to observe that there are affine surfaces not satisfying $(*)$ that can be covered by only one map. To create an example, it is enough to send the exceptional divisor to the infinity hyperplane together with the image of $L_\infty$.

\begin{example}
Consider the transformation of the plane  $t(x_0:x_1:x_2)=(x_0x_1x_2:x_1^3:x_0x_2^2)$ and let $F$ be its composition with the degree-3 Veronese morphism $v_3(x_0:x_1:x_2)= (x_0x_1x_2:x_0^3:\cdots:x_2^3)$. We can observe that $\P^2-t(\A_0)=\{x_0x_1x_2=0\}=v_3^{-1}(\{y_0=0\})$. On the other hand, the fundamental locus is defined by the ideal $I=(x_0x_1x_2,x_1^3,x_0x_2^2)$, which is singular, so $F$ does not satisfy $(*)$, but $F(1,x_1,x_2)$ covers the whole affine surface.
\end{example}

\section{Conclusions}

Covering a rational surface as the union of images of several parametrizations is an important problem with direct implications in the potential feasibility of the applications. There were previous results for some particular types of surfaces. In this paper we enlarge the class of surfaces where this approach is valid. More precisely, we present two algorithms, one for the projective case and another for the affine case. In the latter, we are able to cover any rational affine surface, satisfying certain hypotheses on the base locus of the input parametrization, that is optimal in the number of cover elements, namely, two. For the projective case, the answer provides three cover parametrizations. Some open problems are, on one hand, the extension of the results to the case where no condition of the base locus is imposed, and the optimality on the number of cover parametrizations in the projective case.

\noindent \textbf{Acknowledgments.}
The authors are partially supported by FEDER/Ministerio de Ciencia, Innovaci\'on y Universidades - Agencia Estatal de Investigaci\'on/MTM2017-88796-P (Symbolic Computation: new challenges in Algebra and Geometry together with its applications). J. Caravantes, J.R. Sendra and C. Villarino belong to the Research Group ASYNACS (Ref. CT-CE2019/683). D. Sevilla is a member of the research group GADAC and is partially supported by Junta de Extremadura and FEDER funds (group FQM024).

\bibliography{3patches_arxiv}
\bibliographystyle{apalike}

\noindent Jorge Caravantes, Research Group ASYNACS \\ Dpto. de F\'isica y Matem\'aticas, Universidad de Alcal\'a \\ 28871 Alcal\'a de Henares (Madrid), Spain \\ Email: \texttt{jorge.caravantes@uah.es}

\medskip

\noindent J. Rafael Sendra, Research Group ASYNACS \\ Dpto. de F\'isica y Matem\'aticas, Universidad de Alcal\'a \\ 28871 Alcal\'a de Henares (Madrid), Spain \\
E-mail: \texttt{Rafael.sendra@uah.es}

\medskip

\noindent David Sevilla, Research group GADAC \\ Centro U. de M\'erida, Universidad de Extremadura \\ Av. Santa Teresa de Jornet 38, 06800 M\'erida (Badajoz), Spain \\
E-mail: \texttt{sevillad@unex.es}

\medskip

\noindent Carlos Villarino, Research Group ASYNACS \\ Dpto. de F\'isica y Matem\'aticas, Universidad de Alcal\'a \\ 28871 Alcal\'a de Henares (Madrid), Spain \\
E-mail: \texttt{Carlos.villarino@uah.es}

\end{document}